\providecommand{\algorithmname}{Algorithm}
\newcommand{\algrule}[1][.2pt]{\par\vskip.5\baselineskip\hrule height #1\par\vskip.5\baselineskip}
\newcommand{\tikzmark}[1]{\tikz[overlay,remember picture] \node (#1) {};}
    \algrenewcommand\alglinenumber[1]{\tikzmark{\arabic{ALG@line}}\tiny#1:}
\newtheorem{lemma}{Lemma}
\newtheorem{prop}{Proposition}
\newtheorem{theorem}{Theorem}\newenvironment{proof}{\sl \noindent Proof: \rm}{$\Box$}
\newcommand{\ignore}[1]{}
\newtheorem{remark}{Remark}
\newcommand{\M}[2][]{\bm{#1{\mathbf{\MakeUppercase{#2}}}}}              
\title{Minimizing the Arithmetic and Communication Complexity of Jacobi's Method for Eigenvalues and Singular Values: \\ Part One - Serial Algorithms}
\author{James Demmel, others?}
\author{James Demmel\footnote{Department of EECS (Computer Science Division) and Department of Mathematics, University of California Berkeley} \and Hengrui Luo\footnote{Department of Statistics, Rice University}
\and Ryan Schneider\footnote{Department of Mathematics, University of California Berkeley}
\and Yifu Wang\footnote{Committee on Computational and Applied Mathematics, University of Chicago}}
\date{}
\begin{document}
\maketitle
\begin{abstract}

We analyze several versions of Jacobi's method for the symmetric eigenvalue problem. Our goal is to reduce the asymptotic cost of the algorithm as much as possible, as measured by the number of arithmetic operations performed and associated (serial or parallel) communication, i.e., the amount of data moved between slow and fast memory or between processors in a network. The first half of this effort, which considers the serial setting, is presented here; this paper contains rigorous complexity bounds for a variety of serial Jacobi algorithms, built on both classic $O(n^3)$ matrix multiplication and fast, Strassen-like $O(n^{\omega_0})$ alternatives. In the classical case, we show that a blocked implementation of Jacobi's method attains the communication lower bound for $O(n^3)$ matrix multiplication (and is therefore expected to be communication optimal among $O(n^3)$ eigensolvers). In the fast setting, we demonstrate that a recursive version of blocked Jacobi can go further, reaching essentially optimal complexity in both measures. We also derive analogous complexity bounds for (one-sided) Jacobi SVD algorithms. A forthcoming sequel to this paper will extend our complexity analysis to the parallel case.
\end{abstract}

\section{Introduction}\label{sec:intro}
We study Jacobi's method \cite{Jacobi1846}, the oldest known algorithm for solving the symmetric eigenvalue problem. Starting from a (dense) symmetric matrix $\mathbf{A} \in {\mathbb R}^{n \times n}$, Jacobi's method applies a sequence of orthogonal similarity transformations, each of which annihilates a pair, or collection, of off-diagonal entries of $\mathbf{A}$ (see Section~\ref{section: classical} for a precise description). In doing so, the algorithm gradually diagonalizes the input matrix, producing a decomposition $\mathbf{A} = \mathbf{Q}\mathbf{D}\mathbf{Q}^T$, where $\mathbf{Q}$ is an orthogonal eigenvector matrix and $\mathbf{D}$ is a diagonal matrix containing the eigenvalues of $\mathbf{A}$. Jacobi's method, like other algorithms for the symmetric eigenproblem, can also be used to compute the singular value decomposition (SVD) of an arbitrary real matrix \cite{DrmacVeselic08a,DrmacVeselic08b}, recalling that the right singular vectors of $\mathbf{G} \in {\mathbb R}^{n \times n}$ diagonalize the symmetric matrix $\mathbf{G}^T\mathbf{G}$. Throughout, we restrict to real matrices for convenience; the analogous complex Hermitian case is a straightforward extension.   \\
\indent Alternatives to Jacobi's method include approaches that reduce $\mathbf{A}$ to tridiagonal form and subsequently diagonalize a tridiagonal 
matrix \cite{DHS89,Cuppen_dnc,sobczyk2024, dhillon2003orthogonal}, 
power iterations \cite{golub2000eigenvalue}, and highly-parallel spectral
divide-and-conquer algorithms \cite{Nakatsukasa_Higham_2013,Shah_Hermitian,gemp2020eigengame,definite_dnc}. In spite of these more modern options, Jacobi's method has endured, in part because of its simplicity and in part because of its well-documented performance advantages \cite{athi2016real,shiri2019fpga}, particularly the high relative accuracy it can achieve when applied to matrices with small eigenvalues (or small singular values in the SVD case) \cite{DemmelVeselic92,DrmacVeselic08a,DrmacVeselic08b}.  In light of its continued relevance, we derive in this paper rigorous complexity bounds for several versions of Jacobi's method, which estimate associated arithmetic and serial communication costs. In this context, arithmetic cost concerns the number of floating point operations (flops) performed, while communication refers to the cost of moving data to/from fast memory of size $M$. \\ 
\indent Importantly, diagonalizing a symmetric matrix $\mathbf{A}$ is at least as costly, in both arithmetic and communication, as matrix multiplication.\footnote{\textcolor{black}{See Appendix~\ref{section: lower_bounds}.}} In the serial case, our baselines for arithmetic/communication complexity 
are therefore $\Omega(n^{\omega_0})$ and $\Omega(n^{\omega_0}/M^{\omega_0/2-1})$, respectively, for $2 < \omega_0 \leq 3$.\footnote{\textcolor{black}{We assume familiarity with standard big-O notation; see the end of this section for definitions.}}  Here, the specific value of $\omega_0$ is determined by the matrix multiplication algorithm used by Jacobi. $\omega_0 = 3$ corresponds to classical dot-product-based matrix multiplication, for which the communication lower bound was proved by Hong and Kung \cite{HK81}, while $2 < \omega_0 < 3$ designates the wide class of \textit{fast} matrix multiplication algorithms \cite{blaser2013fast}. For these, we focus specifically on Strassen-like options -- i.e., those with recursive structure like that of Strassen's $O(n^{\log_2(7)})$ routine \cite{Strassen69}, which exhibit stability \cite{fastMM} and can be formulated to attain the associated communication lower bound \cite{BDHS14,SHS15}. 
This includes the current fastest-known algorithms \cite{faster_MM,fawzi2022discovering}. \\
\indent Our goal throughout is to probe the limits of Jacobi's method to  reach these complexity bounds when built on both classical \textit{and} fast matrix multiplication (hence, some results are stated in terms of $n^3$ while others are in terms of $n^{\omega_0}$). In doing so, we place Jacobi's method at the intersection of two concurrent efforts in numerical linear algebra, which aim to (stably) implement a variety of basic linear algebra computations with arithmetic complexity equal to that of matrix multiplication \cite{demmel2007fast} while also deriving algorithms that minimize associated communication costs \cite{ActaNumerica2014}. In the case of symmetric eigensolvers, these considerations go back at least to the late 80s (e.g., \cite{Yau_Lu}).

\renewcommand{\arraystretch}{1.6}
\renewcommand\multirowsetup{\centering}
\begin{table}[t]
    \centering
    \begin{tabular}{cccccc}
        \toprule
         \textbf{Type} & \textbf{Algorithm} & \textbf{Source} & \textbf{Arithmetic} & \textbf{Communication} & \textbf{Notes}  \\
         \midrule
         \multirow{2}{*}{\makecell{Tridiagonal\\ Reduction}} & Classical & \cite{DHS89,Cuppen_dnc,BDK12} & $O(n^3)$ & $O(n^3/\sqrt{M})$ &\\
         & Fast & \cite{sobczyk2024} & $O(n^{\omega_0}\log(n))$ & Open &  \\
         \midrule
         \multirow{2}{*}{\makecell{Divide-and-\\ Conquer}} & QDWH-eig & \cite{Nakatsukasa_Higham_2013} & $O(n^3)$ & Open &\\
         & Randomized & \cite{BDD11,Shah_Hermitian,banks2020pseudospectral,definite_dnc} & $O(n^{\omega_0}\log^2(n))$ & $O(n^3/\sqrt{M})$ & \makecell{
         Comm.\ optimal \\ for general matrices
         } \\
         \midrule
         \multirow{3}{*}{Jacobi} & Scalar & Section~\ref{section: classical} & $O(n^3)$ & $O(n^4/M)$ & $\sqrt{M}\leq n<M$ \\
         & Blocked & Section~\ref{section: block} & $O(n^2b + n^3b^{\omega_0-3})$ & $O(n^3/b)$ & $b = O(\sqrt{M})$\\
         & Recursive & Section~\ref{section: recursive} & $O(n^{3(1-f)+\omega_0f})$ & $O(\frac{n^{3(1-f) +\omega_0f}}{M^{\omega_0/2 - 1}})$ & $0 < f < 1$\\
         \midrule
         \multirow{1}{*}{Other} & Yau \& Lu & \cite{Yau_Lu}  & $O(n^{\omega_0} \log(n))$ & $O(\frac{n^{\omega_0} \log(n)}{M^{\omega_0/2 -1}})$ & Computes $e^{iA}$ \\
         \bottomrule
    \end{tabular}
    \caption{Complexity upper bounds for various (serial) symmetric eigensolvers.}
    \label{tab: complexity_bounds}
\end{table}
\renewcommand{\arraystretch}{1}

\indent We now summarize the complexity bounds derived in the remainder of the paper. \textcolor{black}{Each one  applies to a single \textit{sweep} of Jacobi's method -- that is, one pass over all off-diagonal entries of $\mathbf{A}$. Since the versions of Jacobi considered in this paper are guaranteed to converge (in a way made more precise later), these single-sweep results immediately yield complexity bounds for the full Jacobi procedure assuming convergence occurs after $O(1)$ sweeps.} This is a mild assumption, which we verify experimentally in Section~\ref{section: numerical_ex}. For a comparison with other solvers for the symmetric eigenproblem, see Table~\ref{tab: complexity_bounds}. \\
\indent To the best of our knowledge, existing Jacobi algorithms either attain \textcolor{black}{arithmetic or} communication \textcolor{black}{complexity} lower bounds without a convergence guarantee \cite{DrmacVeselic08a,DrmacVeselic08b}, 
or come with a convergence guarantee at a cost of sub-optimal complexity \cite{Drmac2009}. In this paper, we attain the best of both worlds, specifically by introducing tree-like recursion into the Jacobi algorithm. 
Our contributions can be described as follows: 
\begin{enumerate}
    \item In Section~\ref{section: classical} we consider Jacobi's method as originally introduced, which requires $\Theta(n^3)$ arithmetic operations and can be implemented with communication cost $O(n^4/M)$, assuming $\sqrt{M}\leq n < M$. The latter is provably optimal (see Theorem~\ref{thm: classical_lower_bound}).
    \item In Section~\ref{section: block}, we demonstrate that by blocking Jacobi's method we can decrease the arithmetic complexity to $\Theta(n^2b + n^3b^{\omega_0-3})$ for a parameter $0 < b < n/2$ that defines the block size. When $b = \Theta(\sqrt{M})$, this version of Jacobi hits the communication lower bound for $O(n^3)$ matrix multiplication (see Theorem~\ref{thm: blocked_complexity}). 
    \item Section~\ref{section: recursive} presents a cache-oblivious, recursive version of the blocked algorithm, which is capable of getting essentially optimal complexity, both in terms of arithmetic operations and serial communication. In particular, we derive the respective bounds $O(n^{3(1-f)+\omega_0f})$ and $O(\frac{n^{3(1-f)+\omega_0f}}{M^{\omega_0/2-1}})$, where $0 < f < 1$ is the \textit{log block size}. Taking $f$ arbitrarily close to one yields near-optimal complexity $O(n^{\omega _0})$, albeit for ``galactically''  large $n$. 
    \item In Section~\ref{section: svd} we discuss one-sided Jacobi SVD algorithms, demonstrating that essentially all of our complexity bounds extend to that setting as well. 
\end{enumerate}

Each of Sections~\ref{section: classical}--\ref{section: svd} is structured as follows:\ after introducing a version of Jacobi's method (or Jacobi SVD), we discuss convergence conditions and prove formal complexity bounds. We follow that in Section~\ref{section: numerical_ex} with a handful of numerical examples. Analogous complexity bounds for parallel versions of Jacobi's method will be presented in a forthcoming ``part two" to this work. \textcolor{black}{Extending the improved accuracy of standard Jacobi -- as established in \cite{DemmelVeselic92} -- to the versions of the algorithm presented here (and in part two) is an open problem left to future work}.

\subsection{Notation and Conventions}
Throughout the paper, matrices are represented with bold letters, with $\mathbf{A} \in {\mathbb R}^{n \times n}$ assumed to be symmetric. $|| \cdot ||_2$ and $|| \cdot||_F$ are the spectral and Frobenius norms, respectively, with $\kappa_2(\mathbf{A}) = ||\mathbf{A}||_2 || \mathbf{A}^{-1}||_2$ the spectral norm condition number. Additionally, $\sigma_{\min}(\mathbf{A})$ is the smallest singular value of $\mathbf{A}$, while $\mathbf{A}^{\dagger}$ denotes the Moore-Penrose pseudoinverse. \textcolor{black}{All complexity bounds are stated in standard big-O notation -- i.e., if $f(n)$ and $g(n)$ are positive functions of $n$ then $f(n) = O(g(n))$ if there exists a constant $C > 0$ such that $f(n) \leq Cg(n)$ for all $n$ sufficiently large. Similarly, $f(n) = \Omega(g(n))$ if $C'g(n) \leq f(n)$, again for $n$ sufficiently large and $C' > 0$ a constant. We write $f(n) = \Theta(g(n))$ if both $f(n) = O(g(n))$ and $f(n) = \Omega(g(n))$ hold.}

\section{Classical Jacobi}\label{section: classical}
We begin by considering the classical (i.e., scalar) version of Jacobi's method, originally derived in \cite{Jacobi1846} and presented below as Algorithm~\ref{alg:Jacobi1}. 

\begin{algorithm}
\caption{Classical Jacobi for the Symmetric Eigenproblem}
\label{alg:Jacobi1} \begin{algorithmic}[1]
\Require $\M{A} \in {\mathbb R}^{n \times n}$ is symmetric
\Ensure On output, $\M{A}$
is an (approximately) diagonal matrix $\M{D}$ containing the eigenvalues
of $\M{A}$ and $\M{Q}$ is an orthogonal matrix of (approximate) eigenvectors satisfying $\M{A} = \M{Q}\M{D}\M{Q}^T$.
\algrule
\Function{$[\M{Q},\M{A}]=$ Jacobi}{$\M{A}$} \State $\M{Q}=\M{I}$ \Repeat \For{all off-diagonal entries $(i,j)$
of $\M{A}$, $i<j$, in some order} \If{$|\M{A}(i,j)|$ is large
enough} \State $\M{\hat{A}}=\M{A}([i,j],[i,j])$ \Comment{$\M{\hat{A}}$
is the $2 \times 2$ submatrix of $\M{A}$ in rows/columns $i$ and $j$}
\State Let $\M{\hat{A}}=\M{\hat{Q}}\M{\hat{D}}\M{\hat{Q}}^{T}$ be
the eigendecomposition of $\M{\hat{A}}$ \State Multiply rows $i$
and $j$ of $\M{A}$ by $\M{\hat{Q}}^{T}$ and columns $i$ and $j$
by $\M{\hat{Q}}$ \State Multiply columns
$i$ and $j$ of $\M{Q}$ by $\M{\hat{Q}}$ \EndIf \EndFor \Until{all
off-diagonal entries of $\M{A}$ are small enough} \EndFunction \end{algorithmic} 
\end{algorithm}

The heuristic behind this algorithm is fairly simple. Line~8 performs an orthogonal similarity transformation on $\mathbf{A}$, preserving its eigenvalues while zeroing out entries $\mathbf{A}(i,j)$ and $\mathbf{A}(j,i)$ simultaneously, thereby reducing the squared Frobenius norm of the matrix of off-diagonal entries of $\mathbf{A}$ by $2|\mathbf{A}(i,j)|^{2}$. Once these off-diagonal entries are sufficiently small, so that $\mathbf{A}$ approximately equals a diagonal matrix $\mathbf{D}$, its diagonal entries can be taken as eigenvalue approximations. Moreover, line~9 maintains the orthogonality of $\mathbf{Q}$ and guarantees that the outputs satisfy $\mathbf{A}=\mathbf{Q}\mathbf{D}\mathbf{Q}^{T}$ -- i.e., we obtain an approximate eigendecomposition of $\mathbf{A}$. \\
\indent In this approach, convergence to a diagonal matrix is typically quantified via the off-diagonal ``norm"
\begin{equation}\label{eqn: Omega_A}
    \Xi(\mathbf{A})\coloneqq\sqrt{\sum_{i\neq j}\left|\mathbf{A}(i,j)\right|^{2}}.
\end{equation}
As mentioned above, the inner-most loop of Algorithm~\ref{alg:Jacobi1} reduces \textcolor{black}{$\Xi(\mathbf{A})^2$} by $2|\mathbf{A}(i,j)|^2$ for each $(i,j)$ selected. The rate by which $\Xi(\mathbf{A})$ decreases is usually measured against the number of \textit{sweeps} performed by the algorithm, where one sweep corresponds to a pass over all off-diagonal entries of $\mathbf{A}$ (equivalently, one execution of the loop beginning at Line~3).\footnote{Sweeps are sometimes called ``segments" in earlier literature -- e.g., \cite{henrici1958speed}.} 
Importantly, a guarantee that $\Xi(\mathbf{A})$ converges to zero does not imply that $\mathbf{A}$ approaches a \textit{fixed} diagonal matrix, only that it becomes diagonal as the number of sweeps increases.

The matrix $\M{\hat{Q}}$ in line~7 can be obtained cheaply, either by calling the specialized routine \texttt{slaev2} in LAPACK \cite{LAPACK_User_Guide} or, as in Jacobi's original work, by setting
\begin{equation}\label{eqn: rotaton_matrix}
    \M{\hat{Q}} = \begin{pmatrix} \cos(\theta) & -\sin(\theta) \\ \sin(\theta) & \cos(\theta)   
    \end{pmatrix}
\end{equation}
for an angle $\theta$ satisfying
\begin{equation}\label{eqn: roation_angle}
    \cot(2\theta) = \frac{\mathbf{A}(i,i)-\mathbf{A}(j,j)}{2\mathbf{A}(i,j)} = \frac{\mathbf{\hat{A}}(1,1)-\mathbf{\hat{A}}(2,2)}{2\mathbf{\hat{A}}(1,2)}.
\end{equation}
Other aspects of Algorithm~\ref{alg:Jacobi1} can be varied as well, including the order in which off-diagonal entries are zeroed out and how ``large enough'' and ``small enough'' are defined in lines~5 and 12, respectively. There is a significant amount of work in the literature discussing the impact
of these choices on convergence rate \cite{Forsythe_Henrici_1960,Mascarenhas_1995} and final accuracy \cite{DemmelVeselic92,DrmacVeselic08a,DrmacVeselic08b,DGESVD99}. \\
\indent The simplest way to guarantee convergence for Algorithm~\ref{alg:Jacobi1}, indeed the way it was originally posed in \cite{Jacobi1846}, is to zero out the largest off-diagonal entry of $\mathbf{A}$ at each step. In practice, a max-off-diagonal ordering  is usually prohibitively expensive, as the largest entry would (necessarily) need to be updated after each iteration. 
Instead, cyclic orderings -- which can attain the same convergence rate without tracking a largest entry (see the references below) -- are the standard. As an example, one we will return to throughout, the \textit{column-cyclic} ordering cycles (within each sweep) through above-diagonal indices $(i_k,j_k)$ with $i_k < j_k$ according to
\begin{equation}\label{eqn: column_cyclic}
    (i_{k+1},j_{k+1}) = \begin{cases}
        (i_k+1,j_k) & i_k<j_k-1, j_k \leq n \\
        (1,j_k+1) & i_k = j_k-1, j_k \leq n-1 \\
        (1,2) & i_k = n-1, j_k = n
    \end{cases} \; \; \; \;  \; \; k = 0,1,\ldots
\end{equation}
beginning with $(i_0,j_0) = (1,2)$. For a discussion of alternative cyclic orderings see \cite{Shroff_Schreiber_1989}. \textcolor{black}{Importantly, convergence with respect to \eqref{eqn: column_cyclic} immediately implies convergence with respect to any other equivalent cyclic ordering (see \cite[Remark 2.3]{Drmac2009}).} \\
\indent Rigorous convergence results for so called cyclic Jacobi algorithms originate with Forsythe and Henrici \cite{Forsythe_Henrici_1960,henrici1958speed}, who demonstrated that a version of Algorithm~\ref{alg:Jacobi1} that chooses off-diagonal entries according to \eqref{eqn: column_cyclic} and computes $\mathbf{\hat{Q}}$ via \eqref{eqn: rotaton_matrix} will converge -- even omitting the check in line~5 -- provided the rotation angles $\theta$ are bounded away from $\pi/2$. In practice, failure is quite rare even when this requirement is not satisfied.\footnote{Forsythe and Henrici provide a handful of $3 \times 3$ failure examples, though they are fairly contrived. Larger examples for general cyclic orderings can be found in \cite{Hansen}.} In fact, simply selecting off-diagonal entries randomly is sufficient to guarantee convergence with high probability, regardless of how $\mathbf{\hat{Q}}$ is constructed, as shown below. 

\begin{prop} \label{prop: rand_converges}
    If in line~4 of Algorithm~\ref{alg:Jacobi1} off-diagonal entries are selected uniformly at random (with or without replacement) then $\Xi(\mathbf{A}) \rightarrow 0$ almost surely as the number of sweeps goes to infinity.
\end{prop}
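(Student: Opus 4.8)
The plan is to track the expected decrease of $\Xi(\M{A})^2$ at each step and show that it is bounded below by a multiple of $\Xi(\M{A})^2$ itself, so that the sequence of off-diagonal norms behaves like a supermartingale contracting geometrically in expectation; a Borel--Cantelli / martingale convergence argument then upgrades convergence in expectation to almost-sure convergence. Concretely, let $\M{A}_k$ denote the matrix after $k$ rotations and let $(i,j)$ be the index chosen at step $k+1$. As noted in the text, whenever $(i,j)$ is zeroed out the quantity $\Xi(\M{A})^2$ drops by exactly $2|\M{A}_k(i,j)|^2$ (and it never increases, since an orthogonal similarity preserves the full Frobenius norm and the diagonal sum of squares can only grow). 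Since $(i,j)$ is drawn uniformly from the $\binom{n}{2}$ above-diagonal positions,
\begin{equation*}
    \mathbb{E}\bigl[\Xi(\M{A}_{k+1})^2 \,\big|\, \M{A}_k\bigr] = \Xi(\M{A}_k)^2 - \frac{2}{\binom{n}{2}} \sum_{i<j} |\M{A}_k(i,j)|^2 = \Xi(\M{A}_k)^2\left(1 - \frac{1}{\binom{n}{2}}\right).
\end{equation*}
(For sampling without replacement within a sweep one argues sweep-by-sweep: over a full sweep each off-diagonal pair is hit once, so the analogous one-sweep contraction factor is again strictly less than $1$, with the subtlety that entries already modified earlier in the sweep must be accounted for — but since $\Xi$ is monotone non-increasing, the total decrease over a sweep still dominates a fixed fraction of $\Xi$ at the sweep's start.)

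Iterating the conditional expectation gives $\mathbb{E}[\Xi(\M{A}_k)^2] \leq (1 - 1/\binom{n}{2})^k \, \Xi(\M{A}_0)^2 \to 0$. To pass to almost-sure convergence, observe that $\Xi(\M{A}_k)^2$ is a nonnegative supermartingale, hence converges almost surely to some limit $L \geq 0$ by the martingale convergence theorem; since $\mathbb{E}[\Xi(\M{A}_k)^2] \to 0$ and Fatou gives $\mathbb{E}[L] \leq \liminf \mathbb{E}[\Xi(\M{A}_k)^2] = 0$, we get $L = 0$ almost surely. Alternatively, and perhaps more transparently, Markov's inequality yields $\mathbb{P}(\Xi(\M{A}_k)^2 > \epsilon) \leq (1-1/\binom{n}{2})^k \Xi(\M{A}_0)^2 / \epsilon$, which is summable in $k$ for every fixed $\epsilon$; Borel--Cantelli then shows $\Xi(\M{A}_k)^2 \leq \epsilon$ eventually, almost surely, and taking $\epsilon \to 0$ along a countable sequence finishes the proof.

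I would carry the steps out in this order: (1) establish monotonicity of $\Xi(\M{A}_k)^2$ under any Jacobi rotation and the exact per-step decrease $2|\M{A}_k(i,j)|^2$; (2) compute the one-step (or one-sweep) conditional expectation and extract the contraction factor strictly below $1$; (3) iterate to bound $\mathbb{E}[\Xi(\M{A}_k)^2]$; (4) invoke Borel--Cantelli (or supermartingale convergence) for the almost-sure statement. The main obstacle is step (2) in the \emph{without-replacement} case: because earlier rotations in a sweep alter the entries that later rotations will annihilate, one cannot simply sum independent decreases, and one must instead argue at the granularity of a whole sweep, using the monotonicity from step (1) to show the cumulative decrease over a sweep is at least a fixed fraction of the off-diagonal norm measured at the start of that sweep. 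Everything else is routine. Note also that the result is insensitive to how $\M{\hat Q}$ is formed — whether via \eqref{eqn: rotaton_matrix}--\eqref{eqn: roation_angle} or \texttt{slaev2} — since all that is used is that the chosen pair is exactly annihilated and the similarity is orthogonal; in particular the line-5 threshold check is irrelevant here (skipping a rotation only means that step's decrease is $0$, which does not affect the supermartingale bound once one notes $|\M{A}_k(i,j)|^2$ small enough to skip contributes negligibly, or one simply restricts attention to the version without the check, as Forsythe--Henrici do).
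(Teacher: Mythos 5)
Your argument is correct, but it proceeds along a genuinely different route from the paper's. The paper argues by contradiction on the deterministic limit: since $\Xi(\M{A})$ is non-increasing it has a limit $L$; if $L>0$ then at the start of every sweep some pair satisfies $\M{A}(i,j)^2 \geq L^2/(n(n-1))$, and selecting that pair first (probability at least $2/(n(n-1))$, uniformly over sweeps) would push $\Xi(\M{A})$ below $L$ --- an event that occurs almost surely over infinitely many sweeps, giving the contradiction. Your supermartingale computation
\begin{equation*}
\mathbb{E}\bigl[\Xi(\M{A}_{k+1})^2 \,\big|\, \M{A}_k\bigr] = \Xi(\M{A}_k)^2\Bigl(1 - \tbinom{n}{2}^{-1}\Bigr)
\end{equation*}
is valid (using $\Xi(\M{A}_k)^2 = 2\sum_{i<j}|\M{A}_k(i,j)|^2$ and the exact per-step decrease $2|\M{A}_k(i,j)|^2$), and it buys strictly more than the paper's proof: a geometric decay rate for $\mathbb{E}[\Xi(\M{A}_k)^2]$, from which almost-sure convergence follows by either of your two standard routes. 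The paper's argument is more elementary (no conditional expectations, no martingale theory) but yields no rate. The one place your write-up needs tightening is exactly the spot you flag: in the without-replacement case, "the total decrease over a sweep dominates a fixed fraction of $\Xi$ at the sweep's start" does not follow from monotonicity alone. The clean fix is to use only the \emph{first} rotation of each sweep: its index is uniform over all $\binom{n}{2}$ pairs regardless of the rest of the permutation, so the expected decrease from that single step is $\Xi^2/\binom{n}{2}$ measured at the sweep's start, and monotonicity then bounds the whole sweep's decrease from below by that one step's. (This is, in effect, the same observation the paper exploits.) Your remark about the line-5 threshold is also slightly loose --- a skipped rotation removes that pair's contribution from the conditional expectation, so the clean statement is for the version without the check --- but this does not affect the substance.
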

\begin{proof} Let $L$ be the limit of $\Xi(\mathbf{A})$ as the number of sweeps goes to infinity.\footnote{Note that this limit always exists since $\Xi(\mathbf{A})$ is non-increasing and bounded from below.} Since $\Xi(\mathbf{A})$ cannot increase, we always have $\Xi(\mathbf{A}) \geq L$ . Suppose now $L \neq 0$. In this case, 
    \begin{equation}\label{eqn: omega_limit_condition}
        \Xi(\mathbf{A}) - L < \frac{2L^2}{n(n-1)}
    \end{equation}
    beyond a certain point in the algorithm. At the start of each sweep beyond this point, $\mathbf{A}$ is guaranteed to have an off-diagonal entry with $\mathbf{A}(i,j)^2 \geq \frac{L^2}{n(n-1)}$, a consequence of the lower bound $\Xi(\mathbf{A}) \geq L$. Selecting such an entry as the first to be zeroed out, which occurs with probability at least $2/(n(n-1))$, will cause $\Xi(\mathbf{A})$ to fall below $L$. This implies a contradiction almost surely as the number of sweeps goes to infinity. 
\end{proof} 

\vspace{3mm}
\indent We turn now to complexity bounds for Algorithm~\ref{alg:Jacobi1}. First, we note that one sweep through all the off-diagonal entries of $\mathbf{A}$ costs $\Theta(n^{3})$ flops. 
We next derive a lower bound on the serial communication cost, assuming that the only freedom we have is the order in which off-diagonal entries of $\mathbf{A}$ are chosen in line~4 (in particular leaving blocking/recursion to the subsequent sections). Theorem~\ref{thm: classical_lower_bound} implies that, while we can avoid some communication, we cannot attain our goal of $\Omega(n^{3}/\sqrt{M})$. Its proof applies an argument analogous to, but simpler than, the one used to analyze matrix multiplication in \cite{BDHS11}. 

\begin{theorem}\label{thm: classical_lower_bound}
    Let $W(n)$ denote the serial communication complexity of \textcolor{black}{one sweep of} Algorithm~\ref{alg:Jacobi1}. If $M$ is the size of available fast memory and $\sqrt{M} \leq n < M$ 
    then $W(n) = \Omega(n^4/M)$. This lower bound can be attained with a specific choice of ordering in line 4.
\end{theorem}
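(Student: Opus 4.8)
The plan is to split the statement into two halves: the lower bound $W(n) = \Omega(n^4/M)$, and the matching attainability. For the lower bound, I would adapt the red-blue pebble game / memory-trace argument of Hong--Kung as streamlined in \cite{BDHS11}. The starting observation is that Algorithm~\ref{alg:Jacobi1} must, over a single sweep, touch every above-diagonal entry $(i,j)$, and for each one it reads and writes rows $i,j$ and columns $i,j$ of $\M{A}$ in line~8; the total number of ``elementary updates'' of matrix entries performed in a sweep is $\Theta(n^3)$, since each of the $\Theta(n^2)$ rotations modifies $\Theta(n)$ entries. The key point, exactly analogous to the matmul argument, is a \emph{segmenting} step: partition the execution trace into consecutive segments, each of which moves exactly $M$ words into or out of fast memory. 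During any one segment, the number of matrix entries of $\M{A}$ that can be ``available'' in fast memory (either resident at the segment's start, or read in during it) is at most $2M$. I would then bound how many of the $\Theta(n^3)$ elementary updates can be executed within a single segment given that only $2M$ entries are accessible, obtaining a bound of the form $O(M \cdot (\text{something}))$ per segment; dividing $\Theta(n^3)$ by this per-segment bound gives the number of segments, hence $W(n) \geq M \times (\text{number of segments}) = \Omega(n^3 \cdot M / (\text{per-segment factor}))$.

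The arithmetic that makes this come out to $n^4/M$ rather than $n^3/\sqrt M$ is where the ``simpler than matmul'' remark bites: unlike matrix multiplication, a Jacobi rotation on rows/columns $i,j$ is a \emph{rank-structured}, essentially one-dimensional operation — it only combines two rows (or two columns) at a time — so one cannot reuse a block of $\sqrt M \times \sqrt M$ entries to perform $M^{3/2}$ useful updates the way matmul does. Concretely, I expect the per-segment bound to be $O(M)$ elementary updates involving any \emph{fixed} small set of rotation indices that fit in cache, but because each rotation only links two indices, accessing $k$ rows/columns lets you perform only $O(k \cdot n)$ updates, not $O(k^{3/2})$; feeding this into the segment count with $k = O(M/n)$ rows resident (each row has $n$ entries, so only $O(M/n)$ rows fit) gives $O(M)$ updates per $M$ words moved only when $n \le M$, and otherwise forces $\Omega(n)$ passes over the data, i.e.\ $\Omega(n^2)$ words per sweep scaled up — producing the $n^4/M$ after accounting for the $n^2$ row-pairs each needing its two length-$n$ rows brought in when they don't all fit. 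The hypothesis $\sqrt M \le n < M$ is exactly the regime where a full row fits in cache but the whole matrix does not, which is what makes $n^4/M$ both an upper and a lower bound; I would state this regime assumption explicitly at the point where it is used.

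For attainability, I would exhibit the ordering that achieves $O(n^4/M)$: process the index pairs $(i,j)$ so that for a fixed $j$ (a fixed ``pivot column'') we sweep $i = 1, \dots, j-1$ (as in the column-cyclic ordering \eqref{eqn: column_cyclic}), keeping column $j$ and as many rows as fit — $\Theta(M/n)$ of them — resident in fast memory and streaming the remaining rows through in blocks. Each of the $\Theta(n^2)$ rotations then costs $O(1)$ amortized reads/writes beyond the $O(n)$ needed to bring in its partner row, and bounding the reloads shows each sweep moves $O(n \cdot n^2 / (M/n)) = O(n^4/M)$ words; with $O(1)$ sweeps this is $O(n^4/M)$ total, matching the lower bound.

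The main obstacle I anticipate is getting the per-segment counting argument in the lower bound tight and correct: I need the right combinatorial statement of the form ``if only $S$ entries of $\M{A}$ are touchable, at most $g(S)$ elementary rotation-updates can occur,'' and I must be careful that a rotation touching rows $i,j$ genuinely requires \emph{both} full rows (all $n$ entries) to be present, not just the $2\times 2$ submatrix — this is what forces the $1/n$-style loss relative to matmul and hence the $n^4/M$ scaling. A secondary subtlety is handling the bookkeeping for the output eigenvector matrix $\M{Q}$ and ensuring that including or excluding its updates doesn't change the asymptotics (it doesn't, since updating $\M{Q}$ is the same shape of operation). I would also double-check the boundary of the regime $n \approx \sqrt M$ and $n \approx M$ to confirm the bound degrades gracefully to the known $\Theta(n^3)$ (no communication beyond reading the input) and $\Theta(n^3/\sqrt M)$-type behavior respectively.
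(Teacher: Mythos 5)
Your overall framework (segment the execution into phases of $M$ words moved, bound the flops per segment, divide) is the paper's approach, but the key combinatorial count at the heart of the lower bound is wrong, and with your numbers the argument does not produce $\Omega(n^4/M)$ — it produces a false bound. You claim that accessing $k$ rows/columns permits only $O(k\cdot n)$ elementary updates, so that with $k = O(M/n)$ resident rows a segment performs $O(M)$ flops. That undercounts by a factor of $k$: among $k$ resident row/column indices one can form $\Theta(k^2)$ index pairs, hence execute $\Theta(k^2)$ rotations, each doing $\Theta(n)$ flops entirely on resident data, for $\Theta(k^2 n) = \Theta(M^2/n)$ flops per segment. (The paper states this as: $k$ rotations must touch at least $\sqrt{k}$ distinct rows/columns, the ``simpler than Loomis--Whitney'' observation; equivalently $S$ available entries support $O(S^2/n)$ flops.) Your per-segment bound of $O(M)$ would give $\Omega(n^3/M)$ segments and hence $W(n)=\Omega(n^3)$, which contradicts the attainable $O(n^4/M)$ upper bound in the regime $n<M$ — a sure sign the counting is off. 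With the corrected bound $O(M^2/n)$ per segment, you get $\Omega(n^3/(M^2/n)) = \Omega(n^4/M^2)$ segments and $W(n)=\Omega(n^4/M)$, as required. You never actually carry the arithmetic through; the sentence ending ``producing the $n^4/M$'' papers over exactly the step that fails.

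On attainability, your final formula $O(n\cdot n^2/(M/n)) = O(n^4/M)$ is right, but the ordering you describe — fix a \emph{single} pivot column $j$ and sweep $i=1,\dots,j-1$ — achieves no reuse: consecutive rotations $(i,j)$ and $(i{+}1,j)$ share only row $j$, so each rotation costs $\Theta(n)$ fresh reads and the sweep costs $\Theta(n^3)$. To amortize each $\Theta(n)$-word row load over $\Theta(M/n)$ rotations you must batch $\Theta(M/n)$ pivot indices at a time (equivalently, tile the upper triangle of index pairs into $O(M/n)\times O(M/n)$ squares, as the paper does), load the corresponding rows/columns ($O(M)$ words), and execute all $\Theta(M^2/n^2)$ rotations in the tile before moving on. Your side remarks — that a rotation needs both full length-$n$ rows resident, that the $\M{Q}$ updates have the same shape, and that the regime $\sqrt{M}\le n<M$ is where a row fits but the matrix does not — are all correct and consistent with the paper.
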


\begin{proof} We start by observing that one execution of lines~6-8 does $\Theta(n)$ flops on $2n$ entries of $\mathbf{A}$ (taking advantage of symmetry). Next, we note that $k\leq n$ consecutive executions of lines~6-8 must necessarily access at least $\sqrt{k}$ different rows and/or columns of $\mathbf{A}$. This fact is based on the simple geometric observation (simpler than Loomis-Whitney in  \cite{BDHS11}) that $k$ different lattice points $(i,j)$ must have at least $\sqrt{k}$ different values of $i$ and $j$.\footnote{If $n_{i}$ is the number of different values of $i$, and $n_{j}$ the number of different values of $j$, we want to minimize $\max(n_{i},n_{j})$ subject to $n_{i}\cdot n_{j}\geq k$.} In other words, as long as we are restricted to changing the order in which off-diagonal entries of $\mathbf{A}$ are zeroed out, and $M>n$ so we can fit $\sqrt{k}=\lceil M/n \rceil$ rows or columns of $\mathbf{A}$ into fast memory, we can perform $O(kn)=O(M^{2}/n)$ flops on this submatrix data. Following the proof of \cite[Theorem 2.2]{BDHS11}, we can divide the instruction stream executing Algorithm~\ref{alg:Jacobi1} into {\em segments} each containing $M$ reads and writes between fast and slow memory, during which $O(M)$ entries of $\mathbf{A}$ are available for execution. Since we need to do $\Theta(n^{3})$ flops per sweep, this requires $\Omega(n^{3}/(M^{2}/n))=\Omega(n^{4}/M^{2})$ segments, or $\Omega(n^{4}/M^{2}\cdot M)=\Omega(n^{4}/M)$ reads and writes in total. \\
    \indent This ``restricted'' lower bound is attainable by updating all the $(i,j)$ pairs in an appropriate ``blocked'' order. Specifically,  take the upper triangle of $(i,j)$ pairs with $1\leq i<j\leq n$ and break it into
    squares\footnote{For diagonal blocks, we assume that they take upper triangular forms -- i.e., we have triangles on the diagonal.}  of size $O(M/n) \times O(M/n)$. 
    In this case, we can execute each block completely before handling the next one. This requires $O(M)$ memory by construction (the cost of reading in the corresponding rows/columns) letting us do $O((M/n)^{2}n)=O(M^{2}/n)$ flops on $O(M)$ data, attaining the communication lower bound.
\end{proof} 

\vspace{3mm}

Note that this bound exceeds the conjectured $\Omega(n^{3}/\sqrt{M})$ as long as $\sqrt{M} \leq n$ -- i.e., when the entire matrix does not fit in fast memory. Nevertheless, when $\sqrt{M} \leq  n <M $, the communication-optimal version of Algorithm~\ref{alg:Jacobi1}, guaranteed by Theorem~\ref{thm: classical_lower_bound}, improves on a naive implementation, which communicates $O(n^{3})$ words. When $M<n$, so not even one row or column fits in fast memory, both versions communicate $O(n^{3})$ words.

\section{Block Jacobi}\label{section: block}

The first tool at our disposal to improve these complexity bounds is blocking -- i.e., modifying Algorithm~\ref{alg:Jacobi1} so that $b \times b$ off-diagonal \textit{blocks} are zeroed out for some $2b < n$. The innermost loop of such a blocked Jacobi algorithm, presented here as Algorithm~\ref{alg:Jacobi2}, diagonalizes a $2b \times 2b$ symmetric matrix at each step. In this context, each sweep corresponds to a pass over all off-diagonal blocks of $\mathbf{A}$, with convergence again measured by $\Xi(\mathbf{A})$. Like the classical algorithm, blocked versions of Jacobi's method have a rich history in the literature (see e.g., \cite{Shroff_Schreiber_1989,Drmac2009,Hari_2015,VL_Jacobi}).

\begin{algorithm}[h]
\caption{Block Jacobi for the Symmetric Eigenproblem}
\label{alg:Jacobi2} \begin{algorithmic}[1] \Require $\mathbf{A} \in {\mathbb R}^{n \times n}$ is symmetric and partitioned as follows (for block size $b$ that divides $n$):
\[
\mathbf{A}_{IJ}=\mathbf{A}((I-1)b+1 : Ib, (J-1)b+1:Jb)\; \; \; \; \text{for} \; \; \; \; 1 \leq I,J \leq n/b.
\]
\Ensure On output, $\mathbf{A}$ is an (approximately) diagonal matrix $\mathbf{D}$ containing the eigenvalues of $\mathbf{A}$ and $\mathbf{Q}$ is an orthogonal matrix of (approximate) eigenvectors satisfying $\mathbf{A} = \mathbf{Q}\mathbf{D}\mathbf{Q}^{T}$. 
\algrule
\setstretch{1.1}
\Function{$[\mathbf{Q},\mathbf{A}]=$
Block\_Jacobi}{$\mathbf{A}$} \State $\mathbf{Q}=\mathbf{I}$
\Repeat \For{all off-diagonal blocks $(I,J)$ of $\mathbf{A}$, $I<J$,
in some order,} \State $\mathbf{\hat{A}}=\mathbf{A}([I,J],[I,J])$ \Comment{$\mathbf{\hat{A}}=$ $2b \times 2b$ submatrix of $\mathbf{A}$ in
block rows/cols $I$ and $J$} \If{$\mathbf{\hat{A}}$ is far
enough from diagonal} \State Let $\mathbf{\hat{A}}=\mathbf{\hat{Q}}\mathbf{\hat{D}}\mathbf{\hat{Q}}^{T}$
be an 
(approximate) eigendecomposition of $\mathbf{\hat{A}}$ 
\State $\mathbf{\hat{Q}}_1 = \mathbf{\hat{Q}}(1:b,1:2b)$
\State $[\mathbf{P},\mathbf{L},\mathbf{U}] =$ \textsc{Recursive}\_LUPP$(\mathbf{\hat{Q}}_1^T)$
\rlap{\smash{$\left.\begin{array}{@{}c@{}}\\{}\\{}\end{array}\color{black}\right\}%
          \color{black}\begin{tabular}{l}Optional\end{tabular}$}}
\State $\mathbf{\hat{Q}} = \mathbf{\hat{Q}}\mathbf{P}^T$
\State Multiply block rows $I$ and $J$ of $\mathbf{A}$ by $\mathbf{\hat{Q}}^{T}$
\State Multiply block columns $I$ and $J$ of $\mathbf{A}$ by $\mathbf{\hat{Q}}$
\State Multiply block columns $I$ and
$J$ of $\mathbf{Q}$ by $\mathbf{\hat{Q}}$ \EndIf \EndFor \Until{all off-diagonal
entries of $\mathbf{A}$ are small enough} \EndFunction \end{algorithmic} 
\end{algorithm}

\begin{remark}\label{rem: approx_diag}
   \normalfont{In the blocked setting, fully diagonalizing each submatrix $\mathbf{\hat{A}}$ is not strictly required to reduce $\Xi(\mathbf{A})$. Saad \cite{saad2023revisiting}, for example, states a version of Algorithm \ref{alg:Jacobi2} that \textit{block} diagonalizes $\mathbf{\hat{A}}$ at each step. The upshot of this approach is that the corresponding $\mathbf{\hat{Q}}$ -- obtained by solving a Riccati-type equation -- exhibits additional structure, which can be leveraged to more efficiently handle the subsequent block multiplications. An even coarser \textit{approximate} diagonalization, e.g., obtained from a single sweep of Algorithm~\ref{alg:Jacobi1}, can also be used in line~7, though this may affect convergence rate.}
\end{remark}

\indent As in scalar Jacobi, the order in which off-diagonal blocks are chosen in line~4 may be done dynamically (e.g., by selecting the block with the largest Frobenius norm), cyclically (extending \eqref{eqn: column_cyclic} to block indices), or randomly. Cyclic orderings are again the most popular choice \cite{Drmac2009,foulser1989blocked, Shroff_Schreiber_1989}, in part due to their simplicity but also, as in the proof of Theorem~\ref{thm: classical_lower_bound}, because they can reduce communication. \\
\indent Regardless of the ordering used, we again need to be mindful of convergence, as a naive implementation of blocked Jacobi -- like its scalar counterpart -- may fail (in the sense that $\Xi(\mathbf{A})$ may not converge to zero). The first global convergence results for blocked Jacobi were derived by Drmač \cite{Drmac2009}, who showed $\Xi(\mathbf{A}) \rightarrow 0$ under two conditions:
\begin{enumerate}
    \item Off-diagonal blocks are chosen in a column/row-cyclic fashion.
    \item The smallest singular value of the upper left $b \times b$ block of each $2b \times 2b$ rotation matrix $\mathbf{\hat{Q}}$ is bounded away from zero. 
\end{enumerate}
In essence, these conditions generalize the convergence requirement for the scalar algorithm derived by Forsythe and Henrici \cite{Forsythe_Henrici_1960}. \\
\indent \textcolor{black}{To attain the singular value bound necessary for convergence, we permute the columns of $\mathbf{\hat{Q}}$ in line 10 of Algorithm~\ref{alg:Jacobi2}. The corresponding permutation matrix is obtained from a PLU factorization $\mathbf{\hat{Q}_1}^T = \mathbf{P}^T \mathbf{L} \mathbf{U}$, where $\mathbf{\hat{Q}_1} \in {\mathbb R}^{b \times 2b}$ contains the first $b$ rows of the block rotation $\mathbf{\hat{Q}}$, $\mathbf{L} \in {\mathbb R}^{2b \times b}$ is unit lower trapezoidal, and $\mathbf{U} \in {\mathbb R}^{b \times b} $ is upper triangular. When this LU decomposition is obtained with partial pivoting (referred to here as LUPP), the sub-diagonal ($i > j$) entries of $\mathbf{L}$ satisfy $|\mathbf{L}_{ij}| \leq 1$. This observation alone implies a singular value bound on the upper left $b \times b$ block of $\mathbf{\hat{Q}} \mathbf{P}^T$, which is stated below as Lemma \ref{lemma: singular_value_bound}. Since Drmač's proof of convergence for blocked, cyclic Jacobi \cite[Theorem 2.7]{Drmac2009} depends only on the existence of a lower bound like this one, Lemma \ref{lemma: singular_value_bound} immediately guarantees convergence for a version of Algorithm~\ref{alg:Jacobi2} that executes lines~8-10 and runs through off-diagonal blocks according to a column-cyclic ordering \eqref{eqn: column_cyclic}. We also note that this lemma easily extends to unitary $\mathbf{Q}$, thereby implying convergence for an analogous version of blocked Jacobi for Hermitian matrices. }

\begin{lemma}\label{lemma: singular_value_bound}
     Let $\M{Q} \in {\mathbb R}^{2b \times 2b}$ be an orthogonal matrix with leading rows $\M{Q}_1 \in {\mathbb R}^{b \times 2b}$. Suppose that $\M{Q}_1^T = \M{P}^T\M{L}\M{U}$ is a PLU decomposition computed with partial pivoting. If $\widetilde{\M{Q}}_{11}$ is the upper left $b \times b$ block of $\M{Q}\M{P}^T$ then
     $$ \sigma_{\min}(\mathbf{\widetilde{Q}}_{11}) \geq 3\sqrt{2} \left[ (3b^2 + b)(4^b + 6b - 1) \right]^{-1/2}. $$
\end{lemma}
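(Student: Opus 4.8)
\medskip
\noindent\textbf{Proof proposal.} The plan is to strip off the permutation, rewrite the target block $\widetilde{\M{Q}}_{11}$ in terms of the triangular factors produced by LUPP, and then bound $\sigma_{\min}$ from below by controlling $\|\M{L}_1^{-1}\|$ and $\|\M{U}^{-1}\|$ separately. First I would unwind the permutation: since $\M{P}$ is a permutation and $\M{Q}$ is orthogonal, $\M{Q}\M{P}^T$ is orthogonal, and its leading $b$ rows are $\M{Q}_1\M{P}^T = (\M{P}\M{Q}_1^T)^T = (\M{L}\M{U})^T = \M{U}^T\M{L}^T$, using \eqref{eqn: PLU_decomp} and $\M{P}\M{P}^T=\M{I}$. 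Splitting $\M{L}=\begin{pmatrix}\M{L}_1\\\M{L}_2\end{pmatrix}$ into $b\times b$ blocks (so $\M{L}_1$ is unit lower triangular, and partial pivoting forces every subdiagonal entry of $\M{L}$ -- all of $\M{L}_2$ and the strictly lower part of $\M{L}_1$ -- to have modulus at most $1$), the upper-left $b\times b$ block of $\M{Q}\M{P}^T$ is $\widetilde{\M{Q}}_{11}=\M{U}^T\M{L}_1^T$. Both factors are invertible ($\M{L}_1$ since it is unit triangular; $\M{U}$ since $\M{Q}_1$ has full row rank, being $b$ orthonormal rows), so $\widetilde{\M{Q}}_{11}^{-1}=\M{L}_1^{-T}\M{U}^{-T}$, and by submultiplicativity of $\|\cdot\|_F$ together with $\|\cdot\|_2\le\|\cdot\|_F$,
\[
\sigma_{\min}(\widetilde{\M{Q}}_{11})=\frac{1}{\|\widetilde{\M{Q}}_{11}^{-1}\|_2}\ \ge\ \frac{1}{\|\M{L}_1^{-1}\|_F\,\|\M{U}^{-1}\|_F}.
\]

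The bound on $\|\M{U}^{-1}\|_F$ comes from orthonormality of the first block-row of $\M{Q}\M{P}^T$: the identity $(\M{U}^T\M{L}^T)(\M{U}^T\M{L}^T)^T=\M{I}_b$ rearranges to $\M{U}^{-T}\M{U}^{-1}=\M{L}^T\M{L}$, and taking traces of both sides gives $\|\M{U}^{-1}\|_F^2=\|\M{L}\|_F^2$. Partial pivoting then makes the right side elementary: $\|\M{L}\|_F^2\le b+\binom{b}{2}+b^2=\tfrac{3b^2+b}{2}$ (ones on the diagonal of $\M{L}_1$, at most $1$ on the $\binom b2$ strictly-lower entries of $\M{L}_1$, and at most $1$ on each of the $b^2$ entries of $\M{L}_2$), so $\|\M{U}^{-1}\|_F\le\sqrt{(3b^2+b)/2}\le(3b^2+b)/2$, the last inequality valid because $(3b^2+b)/2\ge1$ for $b\ge1$.

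For $\|\M{L}_1^{-1}\|_F$ I would invoke the classical estimate for the inverse of a unit lower triangular matrix with entries of modulus $\le1$: by induction on $i-j$ in the forward-substitution recurrence, $|(\M{L}_1^{-1})_{ij}|\le 2^{i-j-1}$ for $i>j$, while the diagonal entries are $1$. Summing and evaluating the resulting geometric-type sum,
\[
\|\M{L}_1^{-1}\|_F^2\ \le\ b+\sum_{i>j}4^{i-j-1}\ =\ b+\sum_{d=1}^{b-1}(b-d)\,4^{d-1}\ =\ b+\frac{4^b-3b-1}{9}\ =\ \frac{4^b+6b-1}{9},
\]
hence $\|\M{L}_1^{-1}\|_F\le\tfrac13\sqrt{4^b+6b-1}$. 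Substituting both bounds into the displayed inequality yields $\sigma_{\min}(\widetilde{\M{Q}}_{11})\ge 6\,(3b^2+b)^{-1}(4^b+6b-1)^{-1/2}$, which is the claim.

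I expect the genuinely load-bearing step to be the identity $\|\M{U}^{-1}\|_F=\|\M{L}\|_F$: unlike $\M{L}$, the factor $\M{U}$ carries no a priori entrywise bound and its smallest singular value could in principle be arbitrarily small, so the argument really hinges on recognizing that orthonormality of the leading block-row of $\M{Q}\M{P}^T$ ties $\M{U}^{-1}$ back to the pivot-controlled matrix $\M{L}$. The remaining ingredients -- the triangular-inverse estimate and the geometric sum -- are routine; the only other thing to be careful about is the transpose/permutation bookkeeping that identifies $\widetilde{\M{Q}}_{11}$ with $\M{U}^T\M{L}_1^T$.
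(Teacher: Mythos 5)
Your proposal is correct and follows essentially the same route as the paper's proof: factor $\widetilde{\M{Q}}_{11}=\M{U}^T\M{L}_1^T$, use orthonormality of the rows of $\M{Q}_1$ to tie $\|\M{U}^{-1}\|$ to the pivot-controlled $\|\M{L}\|$, and apply the classical $|(\M{L}_1^{-1})_{ij}|\le 2^{i-j-1}$ estimate. The only (harmless) differences are that you obtain $\|\M{U}^{-1}\|_F=\|\M{L}\|_F$ via a trace identity where the paper uses $\M{U}^{-1}=\M{Q}_1\M{P}^T\M{L}$ and submultiplicativity, and that you carry out explicitly the geometric sum the paper delegates to a citation.
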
 
\begin{proof} If $\mathbf{L}_1 \in {\mathbb R}^{b \times b}$ is the upper block of $\mathbf{L}$, then $\mathbf{\widetilde{Q}}_{11} = \mathbf{U}^T\mathbf{L}_1^T$ and therefore $\sigma_{\min}(\mathbf{\widetilde{Q}}_{11}) \geq \sigma_{\min}(\mathbf{U}) \sigma_{\min}(\mathbf{L}_1)$.
    Since the columns of $\mathbf{Q}_1^T$ are orthonormal, the decomposition $\mathbf{Q}_1^T = \mathbf{P}^T\mathbf{L}\mathbf{U}$ implies $\mathbf{U}^{-1} = \mathbf{Q}_1\mathbf{P}^T\mathbf{L}$. Hence, $||\mathbf{U}^{-1}||_2 = ||\mathbf{Q}_1\mathbf{P}^T\mathbf{L}||_2 \leq ||\mathbf{L}||_2 $
    and we have 
    \begin{equation}
        \sigma_{\min}(\mathbf{\widetilde{Q}}_{11}) \geq ||\mathbf{L}||_2^{-1} \sigma_{\min}(\mathbf{L}_1).
    \end{equation}
    The bound now follows from $||\mathbf{L}||_2 \leq \frac{1}{\sqrt{2}} (3b^2 + b)^{1/2}$ and $||\mathbf{L}_1^{-1}||_2 \leq \frac{1}{3} (4^b + 6b-1)^{1/2}$, both of which can be obtained by passing to the Frobenius norm and using the fact that $\mathbf{L}_1$ and $\mathbf{L}$ are unit lower triangular/trapezoidal with $|\mathbf{L}_{ij}| \leq1$ for $i > j$. The bound on $||\mathbf{L}_1^{-1}||_2$ requires computing the Frobenius norm of a matrix whose entries are powers of two; see \cite[Chapter 6]{Solving_LS} for the details.  
\end{proof}

\vspace{3mm}
\indent \textcolor{black}{This is the first work to propose using LUPP to guarantee convergence in blocked Jacobi. In short, this is due to our focus on asymptotic complexity, as LUPP can be done with only $O(n^{\omega_0})$ flops if implemented recursively (see \cite{Toledo97} and \cite[Section 4.2]{demmel2007fast}). This is in contrast to alternatives like column-pivoted QR -- the choice of Drmač in \cite{Drmac2009} -- which are much more expensive. We discuss this further  in Remark~\ref{rem: QRCP} at the end of this section. For completeness, we also provide pseudocode for the version of LUPP that achieves optimal arithmetic complexity in Appendix~\ref{section: LUPP}.}\\
\indent As in the scalar case, failure modes for blocked Jacobi are rare. Consequently, it is typically not worth executing lines~8-10 of Algorithm~\ref{alg:Jacobi2} on every submatrix. Instead, LUPP should be invoked only if it appears that $\Xi(\mathbf{A})$ is stagnating. Alternatively, extending Proposition~\ref{prop: rand_converges} to the blocked setting, simply selecting off-diagonal blocks in random order should guarantee convergence with high probability. \\
We are now ready to derive complexity bounds for Algorithm~\ref{alg:Jacobi2}, all of which apply whether or not the call to recursive LUPP is executed. We assume here that $b = O(\sqrt{M})$ -- i.e., the usual ``block fits in cache" condition. 
\begin{theorem}\label{thm: blocked_complexity}
    Let $F(n)$ and $W(n)$ denote, respectively, the arithmetic and (serial) communication complexities of \textcolor{black}{one sweep of}  Algorithm~\ref{alg:Jacobi2} under the following conditions:
    \begin{enumerate}
        \item $O(n^{\omega_0})$ matrix multiplication is used.
        \item $b = O(\sqrt{M})$ is chosen so that lines 7-10 can take place entirely in fast memory of size $M$.
    \end{enumerate}
    Then $F(n) = \Theta(n^2b + n^3b^{\omega_0-3})$ and $W(n) = O(n^3/b)$. In particular, $W(n) = O(n^3/\sqrt{M})$ if $b = \Theta(\sqrt{M})$.
\end{theorem}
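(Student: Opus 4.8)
The plan is to count arithmetic and communication separately, in each case decomposing the work of one sweep into (i) the cost of diagonalizing the $2b \times 2b$ submatrices $\M{\hat{A}}$ (plus the optional LUPP/permutation work) and (ii) the cost of the block row/column updates in lines 11--13, and then multiplying by the number of off-diagonal blocks per sweep and by the $O(1)$ sweep count. There are $\Theta((n/b)^2)$ off-diagonal blocks. Diagonalizing a $2b \times 2b$ matrix (line 7), together with the recursive LUPP on the $b \times 2b$ factor (line 9) and forming $\M{\hat{Q}}\M{P}^T$ (line 10), costs $O(b^{\omega_0})$ flops, so across one sweep this contributes $O((n/b)^2 \cdot b^{\omega_0}) = O(n^2 b^{\omega_0 - 2})$. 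Each block row/column update multiplies a $2b \times n$ slab of $\M{A}$ (and of $\M{Q}$, if requested) by a $2b \times 2b$ orthogonal matrix; using the $O(m^{\omega_0})$ matrix multiplication routine in blocks of size $b$, this costs $O((n/b) \cdot b^{\omega_0})$ flops per block, hence $O((n/b)^2 \cdot (n/b) b^{\omega_0}) = O(n^3 b^{\omega_0 - 3})$ per sweep. Adding the two contributions and noting $b < n$ so that $n^2 b^{\omega_0-2} = O(n^3 b^{\omega_0 - 3})$ is \emph{not} automatic (it requires $b^{\omega_0-2} \le n b^{\omega_0-3}$, i.e. $b \le n$, which does hold), one still keeps the $n^2 b$ term because it dominates when $\omega_0$ is close to $2$: more carefully, the leading term is $\Theta(n^2 b + n^3 b^{\omega_0-3})$, where the $n^2 b$ piece comes from the unavoidable $\Theta(n^2)$ entry touches per sweep (reading/writing each slab) and the trivial $\Omega(b)$ arithmetic per touched entry during the updates, which I will argue gives the matching lower bound for $F(n)$.

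For communication, the strategy is to show that each off-diagonal block $(I,J)$ can be processed with $O(b^2 + \text{(slab traffic)})$ words moved, and that the slab traffic is what dominates. Under assumption 2, the entire inner computation of lines 7--10 fits in fast memory, so diagonalizing $\M{\hat{A}}$ and computing the permuted rotation costs only $O(b^2)$ reads/writes (bringing in the $2b \times 2b$ submatrix and writing it back). The updates in lines 11--13 require streaming the two block rows and two block columns of $\M{A}$ (a total of $O(bn)$ words) through fast memory; since $b = O(\sqrt M)$, we can hold the $2b \times 2b$ rotation plus an $O(b) \times b$ panel of the slab at once, so the slab is read and written $O(1)$ times, giving $O(bn)$ words per block. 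Multiplying by the $\Theta((n/b)^2)$ blocks per sweep and $O(1)$ sweeps yields $W(n) = O((n/b)^2 \cdot bn) = O(n^3/b)$, and substituting $b = \Theta(\sqrt M)$ gives $O(n^3/\sqrt M)$.

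A couple of points need care. First, the claim that the slab updates can be done with $O(bn)$ communication per block (rather than, say, $O(bn + b^2 \cdot n/b) = O(bn)$ — which is fine — but one must make sure the $O(b^{\omega_0})$ matrix-multiply subroutine used on each $b \times b$ sub-block respects the cache, i.e. does not itself exceed $O(b^2)$ traffic per sub-block; this follows because $b^2 = O(M)$ and the Strassen-like routine run on a problem that fits in cache does $O(b^2)$ communication). Second, the arithmetic lower bound $F(n) = \Omega(n^2 b + n^3 b^{\omega_0 - 3})$: the $\Omega(n^3 b^{\omega_0-3})$ part should follow from the matrix-multiplication lower bound applied to the $(n/b)^3$ block multiplications that the updates entail (this requires that the block rotations are generically "full", so the multiplications cannot be short-cut), and the $\Omega(n^2 b)$ part from the observation that every sweep must update $\Theta(n^2)$ entries of $\M{A}$ and each update of a block row/column by a dense $2b \times 2b$ matrix touches each involved entry with $\Omega(b)$ flops.

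The main obstacle I anticipate is the arithmetic \emph{lower} bound — specifically, pinning down the $\Theta$ (as opposed to $O$) in $F(n) = \Theta(n^2 b + n^3 b^{\omega_0-3})$. The upper bound is a routine flop count, but the lower bound requires arguing that no clever reorganization (exploiting, say, structure in the rotations as in Remark~\ref{rem: approx_diag}, or sparsity that develops in $\M{A}$) can beat it; I would handle this by fixing the algorithm as literally written (dense $2b \times 2b$ rotations applied to full slabs) so that the count is exact up to constants, and by invoking the matrix-multiplication arithmetic lower bound for the $n^3 b^{\omega_0-3}$ term. The communication analysis, by contrast, I expect to be straightforward bookkeeping once assumption 2 is in force.
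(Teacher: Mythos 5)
Your communication analysis is essentially the paper's own: read the $2b\times 2b$ submatrix in $O(b^2)$ words, do lines 7--10 entirely in cache, stream the slabs in $2b\times 2b$ pieces for $O(nb)$ words per block, and multiply by $\Theta((n/b)^2)$ blocks and $O(1)$ sweeps to get $O(n^3/b)$. That part is fine.

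The arithmetic count has a genuine gap in how you obtain the $n^2b$ term. The paper charges $\Theta(b^3)$ flops per block pair for line 7 -- the dense eigendecomposition of a $2b\times 2b$ symmetric matrix is a $\Theta(b^3)$ operation (no $O(b^{\omega_0})$ direct symmetric eigensolver is assumed here; only the \emph{multiplications} and the recursive LUPP are fast). Summing over $\Theta(n^2/b^2)$ blocks gives $\Theta(n^2 b)$, and the block row/column updates give $\Theta(n^3 b^{\omega_0-3})$ exactly as you computed. You instead price lines 7--10 at $O(b^{\omega_0})$, which yields only $O(n^2 b^{\omega_0-2})$, and then try to recover the missing $n^2b$ from ``$\Omega(b)$ arithmetic per touched entry during the updates.'' That fallback is false under condition 1: a fast $2b\times 2b$ multiply performs $O(b^{\omega_0})$ flops on $\Theta(b^2)$ entries, i.e.\ $O(b^{\omega_0-2})=o(b)$ flops per entry when $\omega_0<3$, and indeed the total update cost is the $n^3b^{\omega_0-3}$ term you already have, not $n^2b$. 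So as written your count gives $\Theta(n^2b^{\omega_0-2}+n^3b^{\omega_0-3})$, which understates the stated bound whenever $\omega_0<3$.

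A smaller point: the $\Theta$ in $F(n)$ is not a model-independent lower bound that needs the matrix-multiplication arithmetic lower bound or an argument about ``no clever reorganization''; it is simply an exact (up to constants) count of the flops that Algorithm~\ref{alg:Jacobi2} as written performs. Once the per-block costs are correctly identified as $\Theta(b^3)$ for the diagonalization and $\Theta(nb^{\omega_0-1})$ for the updates, both the upper and lower sides of the $\Theta$ follow from the same tally, and the extra machinery you propose for the lower bound is unnecessary.
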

\begin{proof} We start with a straightforward flop count. For each block pair $(I,J)$, Algorithm~\ref{alg:Jacobi2} requires $\Theta(b^3)$ flops to compute $\mathbf{\hat{Q}}$ (including the optional call to LUPP) and an additional $\Theta(nb^{\omega_0-1})$ flops for the subsequent block row/column multiplications. Since there are $\Theta(n^2/b^2)$ block pairs in total, we conclude that the arithmetic cost of one sweep of Algorithm~\ref{alg:Jacobi2} is $\Theta(n^2b+n^3b^{\omega_0-3})$. \\
    \indent For communication, we note that line~5 requires reading in $O(b^2)$ entries of $\mathbf{A}$, after which the subsequent diagonalization and (optional) LUPP factorization can be done without any additional communication (thanks to our choice of $b$). The final block multiplications in lines 11-13 can then be executed by reading in one $2b \times 2b$ block of $\mathbf{A}$ or $\mathbf{Q}$ at a time, which amounts to $O( \lceil \frac{n}{2b} \rceil (2b)^2) = O(nb)$ total reads. Hence, each sweep of Algorithm~\ref{alg:Jacobi2} has communication cost
    \begin{equation}\label{eqn: one_sweep_comm_block}
        O\left(\left\lceil \frac{n}{b} \right\rceil^2 \left(b^2 + nb \right) \right) = O(n^3/b),
    \end{equation}
    which completes the proof.
\end{proof} 

\vspace{3mm}

Note that our communication bound for Algorithm~\ref{alg:Jacobi2} does not depend on $\omega_0$. This is consequence of the fact that, when $b = O(\sqrt{M})$, communication is dominated by the cost of reading in blocks of $\mathbf{A}$ and $\mathbf{Q}$ in lines 11-13, which is independent of the matrix multiplication routine used to execute them. The arithmetic complexity, on the other hand, \textit{does} depend on $\omega_0$, though $F(n)$ can only be sub-$O(n^3)$ if $b$ is a small power of $n$. Even then, the bound from Theorem~\ref{thm: blocked_complexity} can never beat $\Theta(n^{2.5})$ for any $2 < \omega_0 < 3$,
and of course allowing $b$ to scale with $n$ will eventually run up against the assumption $b = O(\sqrt{M})$. In this way, Algorithm~\ref{alg:Jacobi2} cannot reach optimal complexity, though it does attain the $O(n^3/\sqrt{M})$ 
communication lower bound when $b = \Theta(\sqrt{M})$.

\begin{remark}\label{rem: QRCP}
    \normalfont{\textcolor{black}{Following Drmač \cite{Drmac2009}, QRCP can be used in place of LUPP in Algorithm~\ref{alg:Jacobi2} as follows:\ (1) compute a column-pivoted QR decomposition of $\mathbf{\hat{Q}_1}$ in line 9 and (2) allow the resulting permutation matrix to take the place of $\mathbf{P}^T$ in line 10. Drmač provides a counterpart to Lemma~\ref{lemma: singular_value_bound} for this procedure \cite[Lemma 2.2]{Drmac2009}, which implies convergence for versions of block Jacobi that incorporate it. Nevertheless, QRCP is expensive. In contrast to LUPP, it cannot be implemented with $O(n^{\omega_0})$ arithmetic complexity and is typically communication intensive (with pivots chosen by column norm). Communication-avoidant versions can attain the communication lower bound of $O(n^3)$ matrix multiplication \cite{CARRQR,duersch2020randomized} -- which will be relevant for the next section -- but they are nontrivial to implement and may require additional flops. Reducing \textit{both} arithmetic and communication in QRCP typically requires limiting the number of columns it considers \cite{fakih2025,armstrong2025,QDEIM}, though such algorithms are primarily relevant for matrices much wider than $b \times 2b$. Finally, we note that while stronger rank-revealing guarantees for QRCP-based algorithms (for example \cite{grigori2025}) imply tighter singular value bounds than Lemma~\ref{lemma: singular_value_bound}, they have no bearing on single-sweep complexity.} \\
    \indent Alternatives to deterministic pivoting methods like QRCP/LUPP include algorithms that randomly sample the columns of $\mathbf{\hat{Q}} (1:b,1:2b)$ -- e.g;., via leverage score sampling \cite{ma2014statistical} or a determinental point process \cite{kulesza2011k} -- and re-organize $\mathbf{\hat{Q}}$ accordingly, that is, by shuffling the corresponding columns of $\mathbf{\hat{Q}}$ to the ``front" of the matrix. While there are promising theoretical results for these sampling techniques (see for example \cite{DMM06} \cite[Chapter 6]{RandLAPACK} \cite{poulson2020high}) neither comes with a guarantee of convergence and may be even more costly than QRCP.}
\end{remark}
\section{Recursive Jacobi}\label{section: recursive}
As a final push toward optimal complexity, we consider in this section a recursive version of Jacobi's method (Algorithm~\ref{alg:Jacobi3}). Here, we allow blocked Jacobi to call itself recursively on each block subproblem $\mathbf{\hat{A}}$, only defaulting to direct diagonalization once the problem size falls below a certain threshold, which depends on the fast memory size $M$. This approach is rooted in the observations from the previous section. We saw in the standard blocked case that the cumulative cost of block multiplications/LUPP factorizations could be pushed closer to $O(n^{\omega_0})$ by increasing the block size; by calling Algorithm~\ref{alg:Jacobi2} recursively, we can reap this benefit while eschewing the need to diagonalize a large matrix directly. For simplicity, the blocking strategy in Algorithm~\ref{alg:Jacobi3} is also defined recursively, at each level setting the block size $b = n^f$ 
for $0<f<1$ the log block size, which remains constant through the recursion. In practice, holding $f$ constant is not strictly required, as we discuss in more detail below. \\
\indent Once again, LUPP is included in Algorithm~\ref{alg:Jacobi3} to guarantee convergence. This can be shown easily by bootstrapping the proof of convergence in the blocked case (see Proposition~\ref{prop:recursive_convergence}). Note that the practical considerations from the previous section again apply here:\ failure cases are rare, so LUPP should realistically be employed only when $\Xi(\mathbf{A})$ fails to decrease sufficiently. 

\begin{algorithm}[t]
\caption{Recursive Jacobi for the Symmetric Eigenproblem}
\label{alg:Jacobi3} \begin{algorithmic}[1] \Require $\mathbf{A} \in {\mathbb R}^{n \times n}$ is symmetric and partitioned as follows for log block size $0 < f < 1$:
\[
\mathbf{A}_{IJ}=\mathbf{A}((I-1)b+1:Ib, (J-1)b+1:Jb) \; \; \; \; \text{for} \; \; \; \; b = n^f, \;  1 \leq I,J \leq n^{1-f}.
\]
\Ensure On output, $\mathbf{A}$ is an (approximately) diagonal matrix $\mathbf{D}$ containing the eigenvalues of $\mathbf{A}$ and $\mathbf{Q}$ is an orthogonal matrix of (approximate) eigenvectors satisfying $\mathbf{A} = \mathbf{Q}\mathbf{D}\mathbf{Q}^{T}$. 
\algrule
\setstretch{1.1}
\Function{$[\mathbf{Q},\mathbf{A}]=$ Recursive\_Jacobi}{$\mathbf{A}$,$f$}
\State
$\mathbf{Q}=\mathbf{I}$ \If{$n$ is small enough (less than $n_{\text{threshold}}$) or $2b \geq n$}
\State Solve directly: $\mathbf{A}=\mathbf{Q}\mathbf{D}\mathbf{Q}^{T}$; $\mathbf{A} = \mathbf{D}$ \Comment{Base case}
\Else \Repeat \For{all off-diagonal blocks $(I,J)$ of $\mathbf{A}$, $I<J$, in some order,} \State $\mathbf{\hat{A}}=\mathbf{A}([I,J],[I,J])$ \Comment{$\mathbf{\hat{A}}=$ the $2b \times 2b$ submatrix of
$\mathbf{A}$ in block rows/cols $I$ and $J$} \If{$\mathbf{\hat{A}}$
is far enough from diagonal} \State $[\mathbf{\hat{Q}},\mathbf{\hat{A}}]=$
\textsc{Recursive}\_\textsc{Jacobi}($\mathbf{\hat{A}}$,$f$) 
\State $\mathbf{\hat{Q}}_1 = \mathbf{\hat{Q}}(1:b,1:2b)$
\State $[\mathbf{P},\sim,\sim] =$ \textsc{Recursive}\_LUPP$(\mathbf{\hat{Q}}_1^T)$
\rlap{\smash{$\left.\begin{array}{@{}c@{}}\\{}\\{}\end{array}\color{black}\right\}%
          \color{black}\begin{tabular}{l}Optional\end{tabular}$}}
\State $\mathbf{\hat{Q}} = \mathbf{\hat{Q}}\mathbf{P}^T$
 \State Multiply block rows $I$ and $J$ of $\mathbf{A}$ by $\mathbf{\hat{Q}}^{T}$
\State Multiply block columns $I$ and $J$ of $\mathbf{A}$ by $\mathbf{\hat{Q}}$
\State Multiply block columns $I$ and $J$ of $\mathbf{Q}$ by $\mathbf{\hat{Q}}$
\EndIf \EndFor \Until{all off-diagonal entries of $\mathbf{A}$ are
small enough} \EndIf \EndFunction \end{algorithmic}
\end{algorithm}

\begin{prop}\label{prop:recursive_convergence} 
    If each sweep of Algorithm~\ref{alg:Jacobi3} works through off-diagonal blocks according to the column-cyclic ordering \eqref{eqn: column_cyclic} and executes the optional pivoting step (lines 11-13), 
    then it converges for any symmetric input matrix $\mathbf{A} \in {\mathbb R}^{n \times n}$. 
\end{prop}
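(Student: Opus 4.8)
The plan is to prove convergence by induction on the problem size $n$, using Drmač's convergence theorem for blocked cyclic Jacobi \cite[Theorem 2.7]{Drmac2009} as the base-level engine and Lemma~\ref{lemma: singular_value_bound} to certify that the rotations produced in the recursion satisfy the required singular value hypothesis. The key observation is that Drmač's proof needs only two ingredients: (i) the off-diagonal blocks are visited in a column/row-cyclic order within each sweep, and (ii) the upper-left $b\times b$ block of each $2b\times 2b$ rotation matrix $\M{\hat{Q}}$ has smallest singular value bounded away from zero (uniformly over all rotations applied during the run). Requirement (i) holds by hypothesis. For (ii), the point is that \emph{whatever} orthogonal matrix $\M{\hat{Q}}$ the recursive call in line~10 returns — it need not be close to any particular matrix, and it need not have converged to high accuracy — after the optional pivoting step (lines 11--13) the updated rotation $\M{\hat{Q}}\M{P}^T$ has its leading $b\times b$ block bounded below in smallest singular value by the explicit constant of Lemma~\ref{lemma: singular_value_bound}, namely $6(3b^2+b)^{-1}(4^b+6b-1)^{-1/2}$. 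This bound depends only on $b$, which at the top level equals $n^f$, so it is a fixed positive constant for the given input; hence requirement (ii) is met and Drmač's theorem applies at the top level, giving $\Xi(\M{A})\to 0$.

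More carefully, I would set up the induction as follows. \textbf{Base case:} when $n < n_{\text{threshold}}$ or $2b\geq n$, Algorithm~\ref{alg:Jacobi3} solves directly (line~4), so the output $\M{\hat{A}}$ is (to working precision) diagonal and $\M{\hat{Q}}$ is exactly orthogonal; there is nothing to prove about convergence of the recursive loop. \textbf{Inductive step:} assume that \textsc{Recursive\_Jacobi} called on any symmetric matrix of size strictly less than $n$ terminates and returns an orthogonal $\M{\hat{Q}}$ together with an (approximately) diagonal $\M{\hat{A}}$. The recursive calls in line~10 act on $2b\times 2b = 2n^f \times 2n^f$ submatrices, and since $0<f<1$ and $2b<n$ (we are in the \texttt{else} branch), these are strictly smaller, so the inductive hypothesis yields a genuine orthogonal $\M{\hat{Q}}$ at each step. (Orthogonality is what matters; the proof does not require that the recursive subproblem be solved to any particular accuracy, only that $\M{\hat{Q}}^T\M{\hat{Q}} = \M{I}$, which follows because orthogonal similarity transformations and the direct base-case solve both preserve orthogonality exactly in exact arithmetic.) Now apply Lemma~\ref{lemma: singular_value_bound} to $\M{Q} = \M{\hat{Q}}$ with leading rows $\M{Q}_1 = \M{\hat{Q}}(1:b,1:2b) = \M{\hat{Q}}_1$: the PLU factorization computed in line~12 produces $\M{P}$ such that the leading $b\times b$ block of $\M{\hat{Q}}\M{P}^T$ has smallest singular value at least $6(3b^2+b)^{-1}(4^b+6b-1)^{-1/2} > 0$. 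Since the columns of $\M{\hat{Q}}\M{P}^T$ are still eigenvectors of $\M{\hat{A}}$ (right-multiplication by a permutation only reorders them), $\M{\hat{Q}}\M{P}^T$ still diagonalizes $\M{\hat{A}}$, so lines 14--16 perform a legitimate Jacobi block rotation. With the cyclic ordering in place and a uniform singular value floor on every rotation, \cite[Theorem 2.7]{Drmac2009} gives $\Xi(\M{A})\to 0$, i.e., convergence at level $n$. This closes the induction.

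The main obstacle I anticipate is \textbf{not} the convergence logic itself but rather being careful that the quantity bounded in Lemma~\ref{lemma: singular_value_bound} is exactly the quantity Drmač's theorem requires, and that the bound is uniform over the (finitely many distinct block sizes, but unboundedly many rotations over the course of the run) rotations actually used. Two subtleties deserve a sentence each in the writeup: first, the block size $b$ varies across levels of the recursion, and because of irregular blocks it is not literally $n^f$ at every node — but the singular value bound in Lemma~\ref{lemma: singular_value_bound} is monotone-controllable in $b$ and, crucially, at the \emph{top} level (the only one where we are invoking Drmač's theorem to conclude $\Xi(\M{A})\to 0$ for the original $\M{A}$) the relevant block size is fixed, so a single positive constant suffices; the inner levels only need to terminate and return an orthogonal matrix, which is the inductive hypothesis. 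Second, one should note, as the surrounding text already does, that ``converges'' here means $\Xi(\M{A})\to 0$ (the matrix becomes diagonal in the limit) and not that $\M{A}$ approaches a fixed diagonal matrix; the statement of Drmač's theorem is of exactly this type, so no strengthening is needed. A remark that the argument carries over verbatim to unitary $\M{Q}$ (Hermitian inputs), using the unitary extension of Lemma~\ref{lemma: singular_value_bound} noted earlier, would round out the proof.
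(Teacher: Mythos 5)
Your overall architecture (induction, Drmač's blocked convergence theorem, and Lemma~\ref{lemma: singular_value_bound} for the singular value floor) matches the paper's, but there is a genuine gap in your inductive step: the parenthetical claim that ``orthogonality is what matters; the proof does not require that the recursive subproblem be solved to any particular accuracy, only that $\M{\hat{Q}}^T\M{\hat{Q}} = \M{I}$'' is false, and it discards exactly the ingredient that makes the induction work. A cyclic ordering plus a uniform lower bound on $\sigma_{\min}$ of the leading block of each rotation does not by itself force $\Xi(\M{A})\to 0$: the identity matrix is orthogonal, its leading $b\times b$ block has smallest singular value $1$, and it survives the pivoting step unchanged, yet using it as every ``rotation'' makes no progress whatsoever. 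What Drmač's argument actually consumes from each block step is that the step (approximately) annihilates the off-diagonal blocks of $\M{\hat{A}}$ --- equivalently, that $\Xi(\M{\hat{A}})$ is reduced by a fixed multiplicative factor $0<\rho<1$. The paper's proof makes precisely this the inductive invariant: each recursive call on a $2b\times 2b$ submatrix is assumed (by induction on recursion depth) to reduce $\Xi(\M{\hat{A}})$ by such a factor, and Drmač's proof is then \emph{repeated} --- not merely cited --- under this weaker ``approximate diagonalization'' hypothesis in place of exact diagonalization. Your writeup needs both pieces: (i) the inductive hypothesis must assert quantitative progress on the subproblem, not just orthogonality of the returned $\M{\hat{Q}}$; and (ii) you must note that \cite[Theorem 2.7]{Drmac2009} as stated assumes each block is exactly diagonalized, so you are relying on the fact that its proof goes through when each block step only reduces $\Xi(\M{\hat{A}})$ by a constant factor.

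A secondary, smaller point: you induct on the problem size $n$, whereas the paper inducts on the number of recursion levels $k$; either works, but with your choice you should make explicit that ``terminates and returns an (approximately) diagonal $\M{\hat{A}}$'' is itself the convergence statement being propagated (the inner loop terminates \emph{because} $\Xi(\M{\hat{A}})\to 0$ under its own stopping rule), so the quantitative reduction factor is available to you at the next level up --- you just need to actually invoke it rather than disclaim it. The rest of your proposal (the role of the pivoting step, the fact that right-multiplication by $\M{P}^T$ preserves the eigenvector property, the uniformity of the singular value bound at the top level) is correct and consistent with the paper.
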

\begin{proof} Let $k$ be the number of recursive steps performed by Algorithm~\ref{alg:Jacobi3}. We prove convergence for any value of $k$ inductively. The base case $k = 1$ corresponds to standard blocked Jacobi from Section~\ref{section: block}, and follows from \cite[Theorem 2.7]{Drmac2009} and Lemma~\ref{lemma: singular_value_bound}. Suppose now $k > 1$. At the highest level, Algorithm~\ref{alg:Jacobi3} calls itself on $2b \times 2b$ submatrices $\mathbf{\hat{A}}$. By our induction hypothesis we can assume that each of these calls is successful and in particular that $\Xi(\mathbf{\hat{A}})$ is reduced by a multiplicative factor $0 < \rho < 1$. Since Drmač's proof of convergence in the blocked case requires only this characterization of approximate diagonalization, we can simply repeat his argument to obtain $\Xi(\mathbf{A}) \rightarrow 0$, where executing lines 11-13 at the highest level ensures convergence via Lemma~\ref{lemma: singular_value_bound}.
\end{proof}
\vspace{3mm}

\indent We now present complexity bounds for Algorithm~\ref{alg:Jacobi3}, which -- as in the previous section -- apply whether or not the optional calls to (fast) LUPP are made. This result should be parsed carefully, as its conditions highlight some of the subtleties of constructing a recursive algorithm. First, we note that the log block size $f$ cannot be chosen completely arbitrarily; if $f$ is too close to one, the blocking parameter $b$ may be too large, prompting the algorithm to repeatedly call itself on the input matrix $\mathbf{A}$. Going further, even if the problem size shrinks initially it may stagnate before reaching the threshold for direct diagonalization. The second condition of Theorem~\ref{Thm:complexity_recursive_Jacobi} ensures that neither occurs. 

\begin{theorem}\label{Thm:complexity_recursive_Jacobi} 
Let $F(n)$ and $W(n)$ denote, respectively, the arithmetic and (serial) communication complexities of \textcolor{black}{one sweep of} Algorithm~\ref{alg:Jacobi3} under the following conditions:
\begin{enumerate}
    \item $O(n^{\omega_0})$ matrix multiplication is used.
    \item $0 < f < 1$ and $n_{\text{\normalfont threshold}}$ are constants satisfying $(1-f)^{-1} < \log_2(n_{\text{\normalfont threshold}})$ and $n_{\text{\normalfont threshold}} \leq \sqrt{M/2}$ for $M$ the size of available fast memory.
\end{enumerate} 
Then $F(n)=O\left( n^{3(1-f)+\omega_{0}f}\right)$ and $W(n) = O\left( \frac{n^{3(1-f)+\omega_0f}}{M^{\omega_0/2 - 1}} \right)$.
\end{theorem}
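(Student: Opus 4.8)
The plan is to set up a recurrence for both $F(n)$ and $W(n)$ that mirrors the structure of Algorithm~\ref{alg:Jacobi3}, then solve it using the constraints in condition~2. At a given recursion level with matrix size $n$ and block size $b = n^f$, there are $\Theta(n^{2-2f})$ off-diagonal block pairs $(I,J)$, each of which contributes: (a) one recursive call on a $2b \times 2b$ submatrix, (b) an optional \textsc{Recursive\_LUPP} call on a $b \times 2b$ matrix, costing $\Theta(b^{\omega_0})$ arithmetic and $O(b^2/M^{\omega_0/2-1})$ communication by the results cited in the excerpt, and (c) the block row/column multiplications, each of which multiplies a $2b \times 2b$ matrix against a $2b \times n$ panel. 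Using the assumed $O(n^{\omega_0})$ fast matrix multiplication blocked in $2b \times 2b$ chunks, step (c) costs $\Theta(n/b \cdot b^{\omega_0}) = \Theta(n b^{\omega_0 - 1})$ arithmetic and $O(n b^{\omega_0-1}/M^{\omega_0/2-1})$ communication per block pair. Since $O(1)$ sweeps suffice (condition~3), the local (non-recursive) work at this level is, summing over all $\Theta(n^{2-2f})$ block pairs, $\Theta(n^{2-2f} \cdot n b^{\omega_0-1}) = \Theta(n^{3 - 2f + f(\omega_0-1)}) = \Theta(n^{3(1-f)+\omega_0 f})$ arithmetic (the $b^{\omega_0}$ LUPP term is lower order since $b = n^f < n$), with the communication term being this divided by $M^{\omega_0/2-1}$.

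Next I would write the recurrence. Letting $g(n) = n^{3(1-f)+\omega_0 f}$ denote the per-level local cost, we get
\begin{equation}\label{eqn: recursive_recurrence}
F(n) = \Theta(n^{2-2f}) \cdot F(2n^f) + \Theta(g(n)),
\end{equation}
and similarly for $W(n)$ with $g(n)$ replaced by $g(n)/M^{\omega_0/2-1}$ and with the base case $W(n_{\text{base}}) = O(n_{\text{base}}^2)$ (the cost of reading the base-case block into fast memory, which is possible precisely because $n_{\text{threshold}} \leq \sqrt{M/2}$ forces $2b \leq 2 n_{\text{threshold}} \leq \sqrt{2M}$, so a $2b \times 2b$ block fits). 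The key to unrolling \eqref{eqn: recursive_recurrence} is to check that it is dominated by the top level --- i.e., that the branching factor $n^{2-2f}$ times the shrinkage in $g$ when passing from $n$ to $2n^f$ is a geometric decay. A direct computation gives $n^{2-2f} \cdot g(2n^f) = \Theta\!\left(n^{2-2f} \cdot n^{f(3(1-f)+\omega_0 f)}\right)$, and one checks that the exponent $2 - 2f + f(3(1-f) + \omega_0 f)$ is strictly less than $3(1-f) + \omega_0 f$ for all $0 < f < 1$ and $2 < \omega_0 \leq 3$ (the difference factors as $(1-f)$ times a positive quantity). Hence the recursion tree is top-heavy, the geometric series sums to $O(g(n))$, and $F(n) = O(n^{3(1-f)+\omega_0 f})$; the communication bound follows identically after dividing through by $M^{\omega_0/2-1}$, noting that the contribution of the $O(n^2)$-per-leaf base cases is also absorbed.

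The step I expect to be the main obstacle is handling the irregular blocks and the termination of the recursion rigorously. Because $b = n^f$ need not divide $n$, the subproblems produced at each level are not all exactly $2n^f \times 2n^f$; they can be slightly smaller or come in ragged shapes, and one must argue (as the text previews) that these only hit the base case sooner and so cannot increase the cost. More delicately, one must verify that the recursion actually reaches the base case in $O(\log\log n / \log(1/f))$-ish levels rather than stalling: the map $n \mapsto 2n^f$ must drive the size below $n_{\text{threshold}}$, which is exactly what the condition $(1-f)^{-1} < \log_2(n_{\text{threshold}})$ guarantees --- it ensures that once $n \leq n_{\text{threshold}}$ we have $2n^f < n$ as well, so the ``$2b \geq n$'' escape hatch in line~3 triggers and there is no infinite descent on a stagnant size. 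I would make this precise by showing the sequence of sizes is strictly decreasing and bounded below, hence terminates, and that the number of levels is $O(\log \log n)$, which contributes only a harmless logarithmic factor that is in any case dominated since the per-level costs are themselves geometrically decaying. Everything else is routine bookkeeping with the matrix-multiplication and LUPP cost bounds imported from earlier in the paper.
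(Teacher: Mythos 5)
Your proposal follows essentially the same route as the paper: the same recurrence $F(n) = O(n^{3(1-f)+\omega_0 f}) + O(n^{2-2f})F(2n^f)$, the same per-level accounting of the block multiplications and LUPP calls, and the same reading of the two hypotheses (the threshold condition forces $n^{1-f}>2$ so the size map $n \mapsto 2n^f$ actually descends, and $n_{\text{threshold}} \le \sqrt{M/2}$ puts the base case in fast memory). The one step you should not wave at is the claim that ``the geometric series sums to $O(g(n))$.'' Your one-step computation only compares level $0$ to level $1$; at depth $j$ the ratio of consecutive level totals is roughly $4\cdot n^{-f^{j+1}(1-f)(1-(3-\omega_0)f)}$ once the factors of $2$ from the arguments $2(\cdot)^f$ are tracked, and near the bottom of the recursion $f^{j+1}\log_2 n$ is $O(1)$, so this ratio is a constant that can exceed $1$ --- the per-level costs are \emph{not} geometrically decaying. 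The paper closes this by bounding every term below the top by $O\bigl(4^k n^{2+f-(3-\omega_0)f^2}\bigr)$, showing $k \lesssim \log_2\log_2(n)/\log_2(1/f)$ so that $4^k$ is only polylogarithmic, and then absorbing the whole tail into $O(n^{\alpha_f})$ using the constant-power gap $n^{-(1-f)(1-(3-\omega_0)f)}$ that you computed. You already have both ingredients (the gap and the $O(\log\log n)$ depth), so this is a repair of the wiring rather than a missing idea, but as written the ``geometric decay'' justification would not survive scrutiny. One further bookkeeping point the paper makes explicit and you elide: the accumulated powers of $2$ are the source of the constant with exponential dependence on $(1-f)^{-1}$, i.e., the ``galactic'' nature of the bound.
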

\begin{proof} We start with arithmetic complexity, noting the following for one sweep of Algorithm~\ref{alg:Jacobi3}:
\begin{enumerate}
\item Line~10 costs $F(2b)=F(2n^{f})$ flops by the inductive definition of $F$. 
\item Recalling the discussion in Section~\ref{section: block}, lines 11-13 require $O((2b)^{\omega_0}) = O(n^{\omega_0f})$ flops.
\item If we implement line 14 by doing $\left\lceil \frac{n}{2b}\right\rceil $ $2b \times 2b$ matrix multiplications, it will cost 
\begin{equation}\label{eqn: cost_of_block_mult}
    O\left(\left\lceil \frac{n}{2b}\right\rceil \cdot(2b)^{\omega_{0}}\right)=O\left(n^{1-f+\omega_{0}f}\right)
\end{equation}
flops. We can do the same in lines~15 and 16.
\item In total, lines~11-16 contribute $O\left(n^{3(1-f)+\omega_{0}f}\right)$ flops, as there are $O\left(\left\lceil \frac{n}{b}\right\rceil ^{2}\right) = O(n^{2-2f})$ blocks to iterate through, each of which requires $O\left(\left\lceil \frac{n}{b}\right\rceil \cdot(2b)^{\omega_{0}} + (2b)^{\omega_0}\right)$ operations. Hence the total cost is
\begin{equation}\label{eqn: total_non_recursive_flops}
    O\left(\left\lceil \frac{n}{b}\right\rceil ^{2}\right)\cdot O\left(\left\lceil \frac{n}{2b}\right\rceil \cdot(2b)^{\omega_{0}} + (2b)^{\omega_0}\right)=O\left(n^{3(1-f)+\omega_{0}f}\right).
\end{equation}
\item Combining the preceding items, the total arithmetic cost for each sweep (i.e., lines~6-19) is 
\begin{equation}\label{eqn:Jacobi1}
F(n)=O(n^{3(1-f)+\omega_{0}f})+O(n^{2-2f})F(2n^{f}).
\end{equation}
\item The exponent of the first term in \eqref{eqn:Jacobi1} satisfies $\omega_{0}=\omega_{0}(1-f)+\omega_{0}f < 3(1-f)+\omega_{0}f \leq 3.$ To approach $O(n^{\omega_{0}})$ we therefore want $f \rightarrow 1$. At the same time $f$ cannot be so close to one that $2n^f$ is actually larger than $n$. In particular we need $n^{1-f} > 2$, which is implied by $(1-f)^{-1} < \log_2(n_{\text{threshold}})$ since $n_{\text{threshold}}\ll n$.
\end{enumerate}

We now sum \eqref{eqn:Jacobi1} recursively. Moving one level deeper yields
\begin{equation}\label{eqn: sum_recursively_one}
    \aligned
    F(n) &= O(n^{3(1-f)+\omega_0f}) + O(n^{2-2f})\left[O((2n^f)^{3(1-f)+\omega_0f})+O((2n^f)^{2-2f})F(2(2n^f)^f) \right] \\
    & = O(n^{3(1-f)+\omega_0f}) + O(2^{3(1-f)+\omega_0f}n^{2+f-(3-\omega_0)f^2}) + O(2^{2-2f}n^{2-2f^2})F(2^{1+f}n^{f^2}).
    \endaligned
\end{equation}
Going an additional $k$ steps down, and letting $\alpha_f = 3(1-f)+\omega_0f$ and $S_k = \sum_{i=1}^k f^i$, we have 
\begin{equation}\label{eqn: sum_recursively_two}
    \aligned
    F(n) = O(n^{\alpha_f}) &+ \sum_{j=0}^k O(2^{2j + \alpha_f + (1-(3-\omega_0)f)S_j} n^{2+f^{j+1}-(3-\omega_0)f^{j+2}}) \\
     &+ O(2^{2(k+1)-2 S_{k+1}} n^{2-2f^{k+2}})F(2^{1+S_{k+1}}n^{f^{k+2}}).
    \endaligned
\end{equation}
Each term in this sum represents the number of flops required to handle block multiplications on problems of size $2^{1+S_j}n^{f^{j+1}}$.
Note that $k = 0$ recovers \eqref{eqn: sum_recursively_one}. \\
\indent To simplify this expression, we note that 
\begin{equation}\label{eqn: ratio_of_n_terms}
    2+f^{j+2} - (3-\omega_0)f^{j+3} = [2+f^{j+1} - (3-\omega_0)f^{j+2}] - [f^{j+1}(1-f)(1-(3-\omega_0)f)]
\end{equation}
where $f^{j+1}(1-f)(1-(3-\omega_0)f) > 0$ since $0 < f < 1$. In other words, the powers of $n$ in the sum of \eqref{eqn: sum_recursively_two} are decreasing in $j$. At the same time, treating $f$ as a constant so that $2^{S_j} = O(1)$, the corresponding powers of two satisfy 
\begin{equation}\label{eqn: bound_powers_of_two}
    2^{2j+\alpha_f+(1-(3-\omega_0)f)S_j}  = O(4^j).
\end{equation}
Together, these observations allow us to bound \eqref{eqn: sum_recursively_two} as
\begin{equation}\label{eqn: improve_bound}
    \aligned
    F(n) &= O(n^{\alpha_f}) + \sum_{j=0}^kO(4^jn^{2+f-(3-\omega_0)f^2}) + O(2^{2(k+1)-2 S_{k+1}} n^{2-2f^{k+2}})F(2^{1+S_{k+1}}n^{f^{k+2}}) \\
    & = O(n^{\alpha_f}) + O(4^kn^{2+f-(3-\omega_0)f^2}) + O(2^{2(k+1)-2S_{k+1}}n^{2-2f^{k+2}})F(2^{1+S_{k+1}}n^{f^{k+2}}).
    \endaligned 
\end{equation}
Assuming that Algorithm~\ref{alg:Jacobi3} defaults to direct diagonalization after $k+1$ recursive steps, equivalently that $2^{1+S_{k+1}}n^{f^{k+2}} \leq n_{\text{threshold}}$, we have $F(2^{1+S_{k+1}}n^{f^{k+2}}) \lesssim O(n_{\text{threshold}}^3) = O(1)$, which implies a final, non-recursive bound
\begin{equation}\label{eqn: improve_bound3}
    \aligned
    F(n) &= O(n^{\alpha_f}) + O(4^kn^{2+f-(3-\omega_0)f^2}) + O(2^{2(k+1)-2S_{k+1}}n^{2-2f^{k+2}}) \\
    &= O(n^{\alpha_f}) + O(4^kn^{2+f-(3-\omega_0)f^2}).
    \endaligned
\end{equation}
\indent One question remains:\ how large must $k$ be to guarantee $2^{1+S_{k+1}} n^{f^{k+2}} \leq n_{\text{threshold}}$? We have:
\begin{equation}\label{eqn: bound_k}
    \aligned
    \hspace{1mm} 2^{1+S_{k+1}}n^{f^{k+2}} \leq n_{\text{threshold}} &\iff \sum_{i=0}^{k+1}f^i + \log_2(n) f^{k+2} \leq \log_2(n_{\text{threshold}}) \\
    & \iff \frac{1-f^{k+2}}{1-f} + \log_2(n)f^{k+2} \leq \log_2(n_{\text{threshold}}) \\
    & \iff f^{k+2} \left[ (1-f)\log_2(n) - 1\right] \leq (1-f)\log_2(n_{\text{threshold}})-1.
    \endaligned
\end{equation}
Since our restriction on $f$ and $n_{\text{threshold}}$ ensures that both $(1-f)\log_2(n)-1$ and $(1-f)\log_2(n_{\text{threshold}})-1$ are positive, \eqref{eqn: bound_k} is equivalent to 
\begin{equation}\label{eqn: bound_k_2}
    f^{k+2} \leq \frac{(1-f)\log_2(n_{\text{threshold}}) - 1}{(1-f)\log_2(n) - 1}.
\end{equation}
Hence, it is sufficient to take
\begin{equation}\label{eqn: bound_k_3}
    k \geq \frac{\log_2((1-f)\log_2(n)-1) - \log_2((1-f)\log_2(n_{\text{threshold}})-1)}{\log_2(1/f)} - 2,
\end{equation}
meaning we can assume $k$ is at most $\frac{\log_2((1-f)\log_2(n))}{\log_2(1/f)}$ plus a constant. We can now bound $4^k$ asymptotically as
\begin{equation}\label{eqn: bound_4k}
    4^k \lesssim  \left(2^{\log_2((1-f)\log_2(n))}\right)^{2/\log_2(1/f)} = \left[(1-f)\log_2(n)\right]^{2/\log_2(1/f)}.
\end{equation}
In other words, $4^k$ is at most polylogarithmic in $n$. Since $n^{2+f-(3-\omega_0)f^2}$ is a (constant) power of $n$ smaller than $n^{\alpha_f}$, specifically
\begin{equation}\label{eqn: power_rations}
n^{2+f-(3-\omega_0)f^2} \cdot n^{-\alpha_f} = n^{-(1-f)(1-(3-\omega_0)f)}
\end{equation}
this suffices to show that $O(4^k n^{2+f-(3-\omega_0)f^2})$ is dominated by $O(n^{\alpha_f})$ as $n \rightarrow \infty$. \\
\indent Since both Strassen-like matrix multiplication and fast, recursive LUPP (as formulated in \cite{demmel2007fast}) attain communication lower bounds for their respective operations, extending this analysis to $W(n)$ is straightforward. In particular, the communication cost associated with a single $2b \times 2b$ matrix multiplication or a call to LUPP is $\Theta(\frac{b^{\omega_0}}{M^{\omega_0/2 - 1}}) = \Theta(\frac{n^{f\omega_0}}{M^{\omega_0/2 - 1}})$ (see \cite{BDHS14} for the details). This implies a communication analog of \eqref{eqn:Jacobi1}:
\begin{equation}\label{eqn: recursive_comm}
    W(n) = O \left( \frac{n^{3(1-f)+\omega_0f}}{M^{\omega_0/2-1}} \right) + O(n^{2-2f})W(2n^f).
\end{equation}
We can now simply repeat the argument given above, noting that the base case can be handled entirely in fast memory since $n_{\text{threshold}} \leq \sqrt{M/2}$. 
\end{proof} 

\vspace{3mm} 

\indent Theorem~\ref{Thm:complexity_recursive_Jacobi} implies that Algorithm~\ref{alg:Jacobi3} can reach near-optimal complexity in both arithmetic and communication. It should be noted that the constants suppressed in these bounds are dependent on both $f$ and the fast matrix multiplication routine used. \textcolor{black}{In particular, to apply \eqref{eqn: bound_powers_of_two} we absorbed the constant $2^{\alpha_f + (1-(3-\omega_0)f)S_j}$, where
\begin{equation}
    \alpha_f + (1-(3-\omega_0)f)S_j \leq 3(1-f)+\omega_0 f + (1-(3-\omega_0)f) \cdot \frac{f}{1-f} = \frac{3+(\omega_0-5)f}{1-f}.
\end{equation} 
In this sense, the algorithm is ``galactic;" that is, for fixed $\omega_0$ the hidden constant has exponential dependence on $(1-f)^{-1}$.}

\begin{remark}
    \normalfont{Note that if QRCP replaces LUPP in Algorithm~\ref{alg:Jacobi3} then lines~11-13 require $O((2b)^3) = O(n^{3f})$ flops and \eqref{eqn: total_non_recursive_flops} becomes $O(n^{3(1-f)+\omega_0f} + n^{2+f})$, which grows to $O(n^3)$ as $f \rightarrow 1$. In this way, QRCP prevents recursive Jacobi from reaching optimal complexity. Indeed, like the non-recursive blocked algorithm, $n^{3(1-f)+\omega_0 f} + n^{2+f}$ is always at least $n^{2.5}$.}
\end{remark} 
\indent The relegation to near-optimal complexity in Theorem~\ref{Thm:complexity_recursive_Jacobi} is a consequence of the condition $(1-f)^{-1} < \log_2(n_{\text{threshold}})$, which places a restriction on how far we can reduce $3(1-f)+\omega_0f$. When $n_{\text{threshold}}$ is relatively small, which recall depends on the available fast memory $M$, this may be fairly limiting. In practice, it can be relaxed by allowing $f$ to vary through the recursion -- for example by taking a sequence of log block sizes $f_1 \geq f_2 \geq \cdots $, where at recursive step $i$ (with $i=1$ corresponding to the highest level) the algorithm works with $f_i$ but passes $f_{i+1}$ in its recursive calls. Setting $f_i = f$ for all $i$ recovers Algorithm~\ref{alg:Jacobi3}.\\
\indent Repeating the analysis above, and reusing the notation $\alpha_f = 3(1-f)+\omega_0f$, we can easily compute the arithmetic complexity $F(n)$ for such a formulation of recursive Jacobi:
\begin{equation}\label{eqn: varying_f_recursive}
    \aligned
    F(n) &= O(n^{3(1-f_1)+\omega_0f_1})+O(n^{2-2f_1})F(2n^{f_1}) \\
    & = O(n^{\alpha_{f_1}}) + O(2^{3(1-f_2)+\omega_0f_2}n^{2+f_1-(3-\omega_0)f_1f_2})+ O(2^{2-2f_2}n^{2-2f_1f_2})F(2^{1+f_2}n^{f_1f_2}) \\
    & = O(n^{\alpha_{f_1}}) + \sum_{j=0}^k O(2^{2j+\alpha_{f_{j+2}}+(1-(3-\omega_0)f_{j+2})S_{j+1}'} \cdot n^{2+\prod_{i=1}^{j+1}f_i - (3-\omega_0) \prod_{i=1}^{j+2}f_i}) \\
    & \; \; \; \; \; \; \; \; \; \; \; \; \; \;  \; \; \; + O(2^{2(k+1)-2S_{k+2}'} \cdot n^{2 - 2\prod_{i=1}^{k+2}f_i})F(2^{1+S_{k+2}'} \cdot n^{\prod_{i=1}^{k+2} f_i}).
    \endaligned
\end{equation}
Here, $S'_i = \sum_{j=0}^{i-2} \prod_{l=1}^{i-j} f_l$ with $S_1' = 0$ (to be compared with $S_i$ from \eqref{eqn: sum_recursively_two}). Since we can again bound the powers of two in this sum by $O(4^j)$,\footnote{Note in particular that $S_i' \leq -1 + \sum_{j=0}^{i-1} f_1^j \leq -1+\frac{1}{1-f_1}$ for all $i >1$ since the $f_i$'s are decreasing.} the same argument used to prove Theorem~\ref{Thm:complexity_recursive_Jacobi} will go through here, and imply $F(n) = O(n^{3(1-f_1)+\omega_0f_1})$, provided the following hold:
\begin{enumerate}
    \item The powers of $n$ in the sum \eqref{eqn: varying_f_recursive} are decreasing in $j$.
    \item We reach $n_{\text{threshold}}$ after roughly $O(\log(\log(n))$ recursive steps.
\end{enumerate}
Item one is guaranteed provided
\begin{equation}\label{eqn: varyingf_conditon1}
    f_{i+1} > 1 - \frac{1-f_i}{(3-\omega_0)f_i},
\end{equation}
equivalently as long as the log block size does not shrink too aggressively. The latter, meanwhile, can be accomplished by fixing $f_i$ after a constant number of recursive steps. If, for example, $f_i = f$ for $i \geq p$ and $(1-f)^{-1}<\log_2(n_{\text{threshold}})$, the same argument made in the proof of Theorem~\ref{Thm:complexity_recursive_Jacobi} will imply $2^{1+S_{k+2}'} \cdot n^{\prod_{i=1}^{k+2}}f_i \leq n_{\text{threshold}}$ for $k-p \lesssim \log_2(\log_2(n))$ if $n$ is sufficiently large -- that is, large enough for the problem size to have decreased through the first $p-1$ recursive steps. \\
\indent This is of course the same restriction as in Theorem~\ref{Thm:complexity_recursive_Jacobi}, only now it applies a few steps into the recursion instead of immediately. While this will propagate to a lower bound on $1-f_1$ via \eqref{eqn: varyingf_conditon1}, we can simply increase $p$ to compensate (the tradeoff being an even larger constant hidden by the big-O). As in the proof of Theorem~\ref{Thm:complexity_recursive_Jacobi}, we can repeat this argument with \eqref{eqn: recursive_comm} to similarly remove the restriction on $f$ from the communication bound. \\
\indent Of course, $f$ is not the only aspect of recursive Jacobi that can be adjusted dynamically. Another option is the characterization of ``small enough" in line~19. That is, we may choose to be more/less strict with the requirement for convergence depending on the problem size and recursion level. In practice, different choices for the log block size and convergence criteria will impact performance. We explore this empirically in Section~\ref{section: numerical_ex} and Appendix~\ref{sec:Add_numerical_ex} but leave rigorous tuning (e.g., \cite{cho2025surrogate}) to future work.

\section{Jacobi SVD}\label{section: svd}

As mentioned in Section \ref{sec:intro}, Jacobi's method can be used to compute the SVD of an arbitrary matrix $\mathbf{G} \in {\mathbb R}^{m \times n}$. Indeed, Jacobi-based algorithms are included in the LAPACK SVD drivers as \texttt{xGEJSV} and \texttt{xGESVJ}, with details in 
\cite{DrmacVeselic08a,DrmacVeselic08b}.
These algorithms are popular for the same reasons as standard Jacobi:\ they can be implemented
efficiently and retain its accuracy advantages,
i.e., the ability to compute small singular values
to high relative accuracy
\cite{DemmelVeselic92,DGESVD99}. 
Accordingly, we dedicate this section to discussing Jacobi SVD algorithms and their associated arithmetic/communication complexities. For simplicity, we assume throughout that $m \geq n$.\\
\indent The key insight here is that the SVD of $\mathbf{G}$ can be obtained from a diagonalization of $\mathbf{G}^T\mathbf{G}$, recalling that the right eigenvectors of $\mathbf{G}^T\mathbf{G}$ are right \textit{singular} vectors of $\mathbf{G}$. That is, if $\mathbf{G}^T \mathbf{G} = \mathbf{V} \mathbf{D} \mathbf{V}^T$ for an orthogonal matrix $\mathbf{V}$ and diagonal matrix $\mathbf{D}$, then $\mathbf{G}\mathbf{V} = \mathbf{U} {\bm \Sigma}$ for the SVD $\mathbf{G} = \mathbf{U} {\bm \Sigma} \mathbf{V}^T$. Intuitively, Jacobi-based SVD algorithms obtain $\mathbf{V}$ by applying Jacobi's method (classical, blocked, or recursive) to $\mathbf{G}^T\mathbf{G}$. In general, this is done without forming the Gram matrix explicitly. Instead, submatrices of $\mathbf{G}^T \mathbf{G}$, which are again $2b \times 2b$ and may be handled in a cyclic/random/dynamic order, are computed ``on-the-fly'' from (block) columns of $\mathbf{G}$. The input matrix $\mathbf{G}$ is then transformed to $\mathbf{U}{\bm \Sigma}$ by applying the corresponding rotation matrices on the right \textit{only}. Accordingly, this method for computing the SVD is often referred to as ``one-sided Jacobi," which we present here as Algorithm~\ref{alg:Jacobi_SVD}. Note that this routine can be built on top of any version of Jacobi from the preceding sections. 

\begin{algorithm}[t]
\caption{One-Sided Jacobi SVD}
\label{alg:Jacobi_SVD} \begin{algorithmic}[1]
\Require $\mathbf{G} \in {\mathbb R}^{m \times n}$ with $m \geq n$ and columns partitioned as follows (for $b$ that divides $n$):
\[ \mathbf{G}(:,I) = \mathbf{G}(:, (I-1)b+1:Ib) \; \; \; \text{for} \; \; \; 1 \leq I \leq n/b. \]
 
\Ensure On output, $\mathbf{G} = \mathbf{U} {\bm \Sigma}\mathbf{V}^T$ is an approximate (reduced) singular value decomposition of $\mathbf{G}$, with $\mathbf{U} \in {\mathbb R}^{m \times n}$ and ${\bm \Sigma},\mathbf{V} \in {\mathbb R}^{n \times n}$. ${\bm \Sigma}$ is diagonal and contains approximations of the singular values of $\mathbf{G}$.
\algrule
\setstretch{1.1}
\Function{$[\mathbf{U},{\bm \Sigma},\mathbf{V}]=$
Jacobi\_SVD}{$\mathbf{G}$} \State $\mathbf{V}=\mathbf{I}_n$
\Repeat \For{all $1 \leq I < J \leq n/b $ in some order,} \State $\mathbf{\hat{A}}= \mathbf{G}(:, [I,J])^T \mathbf{G}(:,[I,J])$ \Comment{$\mathbf{\hat{A}}$ is a $2b \times 2b$ submatrix of $\mathbf{G}^T\mathbf{G}$ formed on the fly} \If{$\mathbf{\hat{A}}$ is far
enough from diagonal} \State $\mathbf{\hat{A}}=\mathbf{\hat{V}}\mathbf{\hat{D}}\mathbf{\hat{V}}^{T}$
is an eigendecomposition of $\mathbf{\hat{A}}$ \Comment{ e.g., from Algorithms~\ref{alg:Jacobi1},\ref{alg:Jacobi2}, or \ref{alg:Jacobi3} } 
\State $\mathbf{\hat{V}}_1 = \mathbf{\hat{V}}(1:b,1:2b)$
\State $[\mathbf{P},\sim,\sim] =$ \textsc{Recursive}\_LUPP$(\mathbf{\hat{V}}_1^T)$
\rlap{\smash{$\left.\begin{array}{@{}c@{}}\\{}\\{}\end{array}\color{black}\right\}%
          \color{black}\begin{tabular}{l}Optional\end{tabular}$}}
\State $\mathbf{\hat{V}} = \mathbf{\hat{V}}\mathbf{P}^T$
\State Multiply block columns $I$ and $J$ of $\mathbf{G}$ by $\mathbf{\hat{V}}$
\State Multiply block columns $I$ and
$J$ of $\mathbf{V}$ by $\mathbf{\hat{V}}$ \EndIf \EndFor \Until{the columns of $\mathbf{G}$ are nearly orthogonal} 
\State ${\bm \Sigma} = \text{diag}(||\mathbf{G}(:,1)||_2, \ldots, ||\mathbf{G}(:,n)||_2)$ 
\State $\mathbf{U} = \mathbf{G}{\bm \Sigma}^{\dagger}$ \EndFunction \end{algorithmic} 
\end{algorithm} 

\indent Running Jacobi implicitly on $\mathbf{G}^T\mathbf{G}$ is key to the numerical stability of Algorithm~\ref{alg:Jacobi_SVD}. In particular, forming the Gram matrix $\mathbf{G}^T\mathbf{G}$ squares the condition number (since $\kappa_2(\mathbf{G}^{T}\mathbf{G})=\kappa_2(\mathbf{G})^{2}$),
which can significantly impact the accuracy of the smaller singular values of $\mathbf{G}$ \cite{DrmacVeselic08a,DGESVD99}. Since the popularity of Jacobi-based SVD algorithms is rooted in their ability to compute these singular values to higher precision, avoiding this potential instability is critical. Practical considerations are also a concern, as storing and manipulating an $n \times n$ Gram matrix may present a computational/memory bottleneck in large-scale calculations~\cite{ActaNumerica2014}, though this of course may arise anyways if $b$ is a significant fraction of $n$.\\
\indent The name ``one-sided-Jacobi'' further distinguishes Algorithm~\ref{alg:Jacobi_SVD} from the \textcolor{black}{Kogbetliantz SVD algorithm \cite{Kogbetliantz-1955}}, in which Jacobi is modified to (1) take a nonsymmetric (and potentially rectangular) input matrix and (2) compute an SVD of each subproblem instead of a diagonalization -- e.g., in line 7 of Algorithm~\ref{alg:Jacobi1}/Algorithm~\ref{alg:Jacobi2} or the base case of Algorithm~\ref{alg:Jacobi3}. We focus on the one-sided version for two reasons. First, it is the more commonly used Jacobi-based SVD algorithm. Second, since the \textcolor{black}{Kogbetliantz approach} does not significantly change the structure of Jacobi -- save doubling its memory footprint to allow for \textcolor{black}{different} left and right transformations -- the complexity analysis from the previous sections carries over directly.  \\
\indent Convergence of one-sided Jacobi SVD follows from convergence of classical/blocked Jacobi, provided a cyclic ordering is used on the blocks of $\mathbf{G}^T\mathbf{G}$ and the optional pivoting step in Algorithm~\ref{alg:Jacobi_SVD} is executed. In practice, convergence is accelerated by first computing a QR factorization of $\mathbf{G}$ and running one-sided Jacobi SVD on the Gram matrix of the corresponding R-factor (or its transpose) as follows:
\begin{enumerate}
    \item Pre-process: $\mathbf{G}\mathbf{P} = \mathbf{Q} \begin{pmatrix} \mathbf{R} \\ 0 \end{pmatrix}$
    \item Call Jacobi SVD: $[\mathbf{\hat{V}},{\bm \Sigma},\mathbf{\hat{U}}] = \text{Jacobi\_SVD}(\mathbf{R}^T)$
    \item Assemble: $\mathbf{U} = \mathbf{Q}\begin{pmatrix} \mathbf{\hat{U}} \\ 0 \end{pmatrix}$ and $\mathbf{V} = \mathbf{P}\mathbf{\hat{V}}$ 
\end{enumerate}
The goal here is to both cut down the size of the problem, particularly when $m\gg n$, and run Jacobi on a matrix that is closer to diagonal than $\mathbf{G}^T\mathbf{G}$. To guarantee the latter, the QR factorization must be computed with column pivoting, which, as discussed at length in \textcolor{black}{Remark~\ref{rem: QRCP}}, can present a computational/communication bottleneck. \textcolor{black}{Of course, preprocessing even without pivoting is a good idea, since $\mathbf{R}$ can still be much smaller than $\mathbf{G}$}. Further justification for the details of this approach, particularly the choice to run Jacobi SVD on $\mathbf{R}^T$ instead of $\mathbf{R}$, are discussed in \cite{DrmacVeselic08a}. Note also that this heuristic can be applied within Algorithm~\ref{alg:Jacobi_SVD} itself, where we might compute a QR factorization of $\mathbf{G}(:,[I,J])$ and form $\mathbf{\hat{A}}$ from its R-factor, which can be done stably with optimal communication \cite{DemmelGrigoriHoemmen2012}. \\
\indent We are now ready to state arithmetic/communication complexity bounds for Algorithm~\ref{alg:Jacobi_SVD}. We start with a fairly straightforward flop count. For brevity we omit the proof (see \cite{arxiv_manuscript}). The main addition here over standard Jacobi is the cost of forming each $2b \times 2b$ submatrix of $\mathbf{G}^T \mathbf{G}$, which -- like the subsequent block multiplications in lines 11 and 12 -- can be done via $\lceil \frac{m}{2b} \rceil$ multiplications of $2b \times 2b$ matrices.

\begin{prop}\label{prop: arithmetic_Jacobi_SVD}
    Let $F(m,n)$ denote the arithmetic complexity of \textcolor{black}{one sweep of}  Algorithm~\ref{alg:Jacobi_SVD} under the following conditions:
    \begin{enumerate}
        \item $O(n^{\omega_0})$ matrix multiplication is used.
        \item The method used to diagonalize $\mathbf{\hat{A}}$ in line~7 requires \textcolor{black}{$O(G(2b))$} flops.
    \end{enumerate}
    Then $F(m,n) = O\left(mn^2b^{\omega_0-3} + \lceil \frac{n}{b} \rceil^2 G(2b)\right)$.
\end{prop}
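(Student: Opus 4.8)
The plan is to count the flops performed in one sweep of Algorithm~\ref{alg:Jacobi_SVD} and then multiply by $O(1)$, using condition~3. Within a sweep, the work splits into three pieces: (i) forming each $2b \times 2b$ submatrix $\M{\hat{A}} = \M{G}(:,[I,J])^T\M{G}(:,[I,J])$ on the fly; (ii) diagonalizing $\M{\hat{A}}$ in line~7 (and the optional LUPP in lines~8--10); and (iii) the block-column multiplications in lines~11 and~12. The key observation, already noted in the paragraph preceding the proposition, is that each of the operations in (i) and (iii) involves an $m \times 2b$ matrix times a $2b \times 2b$ matrix, which can be carried out as $\lceil m/(2b) \rceil$ square $2b \times 2b$ multiplications, each costing $O(b^{\omega_0})$ flops, for a total of $O(mb^{\omega_0-1})$ flops per block pair.

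Next I would assemble the per-sweep cost. There are $\lceil n/b \rceil^2 = O((n/b)^2)$ block pairs $(I,J)$. For each one, piece~(i) costs $O(mb^{\omega_0-1})$, piece~(iii) costs $O(mb^{\omega_0-1})$ as well (two block-column updates, each an $m \times 2b$ by $2b \times 2b$ product, plus the update to $\M{V}$ which is an $n \times 2b$ by $2b \times 2b$ product and hence no larger since $m \geq n$), and piece~(ii) costs $G(2b)$ by condition~2, with the optional recursive LUPP on a $2b \times b$ matrix contributing only $O(b^{\omega_0})$, which is absorbed into $G(2b)$ (any reasonable diagonalization routine already does at least $\Omega(b^{\omega_0})$ work, and in any case $O(b^{\omega_0}) = O(G(2b))$ can be assumed, or added explicitly without changing the stated bound since $\lceil n/b\rceil^2 b^{\omega_0} \le mn^2 b^{\omega_0-3}$ when $b \le n$). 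Summing over all block pairs gives
\begin{equation}
O\!\left( \left\lceil \tfrac{n}{b} \right\rceil^2 \left( m b^{\omega_0 - 1} + G(2b) \right) \right) = O\!\left( m n^2 b^{\omega_0 - 3} + \left\lceil \tfrac{n}{b} \right\rceil^2 G(2b) \right),
\end{equation}
using $\lceil n/b \rceil^2 b^{\omega_0-1} = O(n^2 b^{\omega_0-3})$. Multiplying by the $O(1)$ sweep count from condition~3 leaves this bound unchanged, which is exactly $F(m,n)$.

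Finally I would dispose of the lower-order contributions: the formation of $\M{\Sigma}$ in line~13 costs $O(mn)$ flops (computing $n$ column norms of an $m \times n$ matrix), and assembling $\M{U} = \M{G}\M{\Sigma}^{\dagger}$ in line~14 costs $O(mn)$ as well, since $\M{\Sigma}^{\dagger}$ is diagonal; both are dominated by $mn^2 b^{\omega_0-3}$ whenever $b = O(n)$ (indeed $mn \le mn^2 b^{\omega_0-3}$ iff $b^{3-\omega_0} \le n$, which holds for $b \le n$ since $\omega_0 > 2$). I do not expect any genuine obstacle here; the only point requiring a little care is confirming that the block-column update to $\M{V}$ and the optional LUPP step are both subsumed by the two terms already present, so that no extra term (e.g.\ an $n^2 b$ or $n^{2}b^{\omega_0-1}$ contribution) needs to appear in the final bound. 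If one prefers to be fully explicit, the LUPP cost can be carried through as an additive $O(\lceil n/b\rceil^2 b^{\omega_0})$ term and then observed to satisfy $\lceil n/b\rceil^2 b^{\omega_0} = O(n^2 b^{\omega_0-2}) = O(mn^2 b^{\omega_0-3})$ since $b \le n \le m$, so it collapses into the first term regardless.
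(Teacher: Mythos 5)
Your proposal is correct and follows essentially the same argument as the paper: per block pair, $\Theta(mb^{\omega_0-1})$ for forming $\M{\hat{A}}$ and for the block multiplications, $G(2b)$ for the diagonalization, $O((2b)^{\omega_0})$ for the optional pivoting, summed over $\Theta(\lceil n/b\rceil^2)$ pairs and multiplied by $O(1)$ sweeps, with the $O(mn)$ cost of forming $\M{\Sigma}$ and $\M{U}$ absorbed. Your explicit justification that the pivoting term collapses into $mn^2b^{\omega_0-3}$ (using $b\leq n\leq m$) is slightly more careful than the paper's, which simply asserts the simplification.
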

\begin{proof}
    For each pair $I < J$ in a sweep of Algorithm~\ref{alg:Jacobi_SVD}, we note the following steps and their associated arithmetic costs: 
    \begin{itemize}
        \item Forming $\M{\hat{A}}$ (line 5) -- $\Theta(mb^{\omega_0-1})$ flops. 
        \item The subsequent diagonalization (line 7) -- $G(2b)$ flops.
        \item the optional pivoting step (lines 8-10) -- $O(b(2b)^{\omega_0-1})$ flops
        \item The final block multiplications(lines 11-12) -- $\Theta(mb^{\omega_0-1})$ flops.
    \end{itemize}
    Since there are $\Theta(\lceil \frac{n}{b} \rceil^2)$ pairs $(I,J)$ in total, the complexity of one sweep of Algorithm~\ref{alg:Jacobi_SVD} is
    \begin{equation}\label{eqn: sweep_count_SVD}
        O\left(\left\lceil \frac{n}{b} \right\rceil^2 \left(mb^{\omega_0-1} + G(2b) + (2b)^{\omega_0}  \right) \right) = O\left( mn^2b^{\omega_0-3} + \left\lceil \frac{n}{b}\right\rceil^2 G(2b) \right) .
    \end{equation}
    Noting that lines 16 and 17 require only $\Theta(mn)$ flops,  we conclude that $F(m,n)$ is equal to \eqref{eqn: sweep_count_SVD}.
    \end{proof} \\

\textcolor{black}{This result is stated in terms of a black-box function $G(\cdot)$, which bounds the arithmetic complexity of the subroutine used to compute the diagonalization in line 7. Accordingly, it is immediately compatible with the complexities derived in the preceding sections --  e.g., we can take $G(n) = n^3$ if Algorithm~\ref{alg:Jacobi1} is used. In fact, Proposition~\ref{prop: arithmetic_Jacobi_SVD} implies that versions of one-sided Jacobi built on top of these algorithms can be formulated to attain similar arithmetic complexities.} In the recursive setting, this requires taking $b = n^f$, in which case $mn^2b^{\omega_0-3} = mn^{3(1-f)+\omega_0f-1}$, mirroring the leading term from Theorem~\ref{Thm:complexity_recursive_Jacobi} (indeed equal to it when $m = n$). \\
\indent We consider next communication complexity. Again, the main addition here is the cost associated with forming each $2b \times 2b$ submatrix of $\mathbf{G}^T\mathbf{G}$. 
Propositions~\ref{prop: communication_Jacobi_SVD_2} -~\ref{prop: communication_Jacobi_SVD_3} extend our communication bounds from the previous sections, in particular for increasing block size $b$. We again defer the proofs, which are straightforward extensions of the communication bounds from the preceding sections, to  \cite{arxiv_manuscript}. \textcolor{black}{As in Proposition~\ref{prop: arithmetic_Jacobi_SVD}, when the subproblem $\mathbf{\hat{A}}$ is too large to first in fast memory (i.e., Proposition~\ref{prop: communication_Jacobi_SVD_3}) our result relies on a black-box function $U(\cdot)$ that bounds the communication complexity of the algorithm performing the diagonalization.}

\begin{prop}\label{prop: communication_Jacobi_SVD_2}
    Let $W(m,n)$ denote the serial communication complexity of \textcolor{black}{one sweep of} Algorithm~\ref{alg:Jacobi_SVD} with $b = 1$. 
    If $M$ is the size of available fast memory and $\sqrt{M}\leq m < M$, 
    then $W(m,n) = \Omega(m^2n^2/M)$. This lower bound is attainable.
\end{prop}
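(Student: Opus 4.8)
The plan is to mimic the lower-bound argument of Theorem~\ref{thm: classical_lower_bound}, but now tracking the structure of Algorithm~\ref{alg:Jacobi_SVD} with $b=1$, where the ``working data'' consists of columns of $\M{G}$ (each of length $m$) rather than rows/columns of an $n \times n$ symmetric matrix. First I would count arithmetic: for each pair $(i,j)$, forming the $2\times 2$ Gram submatrix $\M{G}(:,[i,j])^T\M{G}(:,[i,j])$ and applying the resulting rotation to columns $i$ and $j$ of $\M{G}$ both cost $\Theta(m)$ flops; with $\Theta(n^2)$ pairs per sweep and $O(1)$ sweeps, the total is $\Theta(mn^2)$ flops. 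The key geometric observation is the analog of the one in Theorem~\ref{thm: classical_lower_bound}: any $k$ consecutive executions of the inner loop touch at least $\sqrt{k}$ distinct columns of $\M{G}$ (since $k$ lattice points $(i,j)$ involve at least $\sqrt{k}$ distinct $i$-values and $j$-values). Because each column has $m$ entries, fitting $\sqrt{k}$ columns into fast memory of size $M$ forces $\sqrt{k}\,m \le M$, i.e. $k \le (M/m)^2$; on this much data the algorithm can do $O(km) = O(M^2/m)$ flops.

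Then I would run the standard segmenting argument of \cite[Theorem 2.2]{BDHS11}: partition the instruction stream into segments of $M$ reads/writes, observe that during each segment only $O(M)$ words of $\M{G}$ are available, hence only $O(M^2/m)$ flops can be performed per segment. Since $\Theta(m n^2)$ flops are required in total, the number of segments is $\Omega(m n^2 /(M^2/m)) = \Omega(m^2 n^2/M^2)$, giving $W(m,n) = \Omega(m^2 n^2/M^2 \cdot M) = \Omega(m^2 n^2/M)$. The hypothesis $\sqrt{M}\le m < M$ is exactly what makes this meaningful: $m < M$ guarantees at least one column fits in cache (so $M/m \ge 1$ and the blocking below is nontrivial), while $\sqrt{M} \le m$ ensures the bound $m^2n^2/M$ exceeds the trivially-communicated input size $\Theta(mn)$ — actually it exceeds $\Omega(n^{\omega_0}/\sqrt M)$ as well in the relevant regime.

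For attainability, I would exhibit a blocked ordering: tile the index set $\{1 \le i < j \le n\}$ into squares of side $\Theta(M/m)$ (with triangles on the diagonal), and process one tile completely before moving to the next. Processing a tile requires the $\Theta(M/m)$ columns indexed by its rows together with the $\Theta(M/m)$ columns indexed by its columns to be resident, costing $O((M/m)\cdot m) = O(M)$ words of fast memory — which fits — and lets us perform $O((M/m)^2 \cdot m) = O(M^2/m)$ flops per tile, matching the per-segment ceiling and hence attaining the lower bound. One subtlety to note is that, unlike the symmetric case, $\M{G}$ is not updated symmetrically, but each rotation still only modifies the two columns it involves, so no extra bookkeeping is needed; the QR preprocessing and the optional pivoting step are one-time or lower-order costs and do not affect the asymptotics. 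The main obstacle I anticipate is being careful that forming $\M{\hat A}$ on the fly does not secretly require more than the two relevant columns to be in fast memory simultaneously — but it does not, since $\M{\hat A}$ is $2\times 2$ — so the argument goes through essentially verbatim from the classical case.
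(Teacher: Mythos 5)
Your proposal is correct and follows essentially the same route as the paper's proof: the paper likewise invokes the segmenting argument of Theorem~\ref{thm: classical_lower_bound}, noting that $\Theta(M/m)$ columns of $\M{G}$ fit in fast memory, permitting $\Theta(M^2/m)$ flops per segment against a total of $\Theta(mn^2)$ flops, which yields $\Omega(m^2n^2/M)$ words moved; attainability via $\Theta(M/m)\times\Theta(M/m)$ tiling of the index set is also the intended argument.
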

\begin{proof}
    This follows from the same argument used to prove Theorem~\ref{thm: classical_lower_bound}. In this case, reading in $\Theta(M/m)$ columns of $\M{G}$ allows us to work through an $\Theta(M/m) \times \Theta(M/m)$ block of $\M{G}^T \M{G}$. We can do at most $\Theta(M^2/m)$ flops on this data -- i.e., $\Theta(m)$ flops for each of the $\Theta(M^2/m^2)$ currently available off-diagonal entries of $\M{G}^T\M{G}$. Since Proposition~\ref{prop: arithmetic_Jacobi_SVD} implies that a total of $O(mn^2)$ flops are required by Algorithm~\ref{alg:Jacobi_SVD} with $b = 1$, we conclude that $\Omega(mn^2/(M^2/m)) = \Omega(m^2n^2/M^2)$ references are required. Since each of these references corresponds to $M$ reads and writes, we conclude $W(m,n) = \Omega(m^2n^2/M)$.
\end{proof}

\begin{prop}\label{prop: communication_Jacobi_SVD}
    Let $W(m,n)$ denote the serial communication complexity of \textcolor{black}{one sweep of} Algorithm~\ref{alg:Jacobi_SVD} under the following conditions:
    \begin{enumerate}
        \item $O(n^{\omega_0})$ matrix multiplication is used.
        \item $b = O(\sqrt{M})$ is chosen so that lines 7-10 can take place entirely in fast memory of size $M$.
    \end{enumerate}
    Then $W(m,n) = O(mn^2/b)$, and in particular $W(m,n) = O(mn^2/\sqrt{M})$ for $b = \Theta(\sqrt{M})$.
\end{prop}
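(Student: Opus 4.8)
The plan is to mirror the proof of Theorem~\ref{thm: blocked_complexity} (the communication bound for block Jacobi), adjusting only for the extra cost of forming each $2b \times 2b$ submatrix $\M{\hat{A}}$ of $\M{G}^T\M{G}$ on the fly. Since we are told the loop runs $O(1)$ times, it suffices to bound the communication cost of a single sweep and multiply by a constant.

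First I would account for one iteration of the inner loop, i.e., one block pair $(I,J)$. Forming $\M{\hat{A}} = \M{G}(:,[I,J])^T\M{G}(:,[I,J])$ requires reading in the two block columns $\M{G}(:,I)$ and $\M{G}(:,J)$, which together have $O(mb)$ entries; the Gram product itself can be done as $\lceil m/(2b) \rceil$ square $2b \times 2b$ multiplications, each of which fits in fast memory by the assumption $b = O(\sqrt{M})$, so no additional communication is incurred beyond streaming those block columns through cache. Once $\M{\hat{A}}$ (an $O(b^2)$-entry matrix) is resident, the diagonalization in line~7 and the optional recursive LUPP in lines~8--10 take place entirely in fast memory, again by the choice of $b$, and hence cost nothing more to read/write. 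The final updates in lines~11--12 multiply block columns $I$ and $J$ of $\M{G}$ (and, if requested, of $\M{V}$) by $\M{\hat{V}}$; these can be executed by streaming one $2b \times 2b$ (or $m \times 2b$, broken into $m$-by-$2b$ chunks of height $2b$) block through cache at a time, which is again $O(mb)$ reads and writes for $\M{G}$ and $O(nb)$ for $\M{V}$. So one block pair costs $O(mb + nb) = O(mb)$ communication, using $m \geq n$.

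Next I would sum over all block pairs. There are $\Theta(\lceil n/b \rceil^2)$ pairs $(I,J)$ with $I < J$, so one sweep costs
\begin{equation}\label{eqn: sweep_comm_SVD}
    O\!\left( \left\lceil \frac{n}{b} \right\rceil^2 \cdot mb \right) = O\!\left( \frac{mn^2}{b} \right),
\end{equation}
and the $O(1)$-sweep assumption gives $W(m,n) = O(mn^2/b)$. Finally, the post-processing steps in lines~16--17 ($\M{\Sigma}$ from the column norms of $\M{G}$, and $\M{U} = \M{G}\M{\Sigma}^\dagger$) require only a single streaming pass over $\M{G}$, costing $O(mn)$ words, which is dominated by \eqref{eqn: sweep_comm_SVD}. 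Substituting $b = \Theta(\sqrt{M})$ yields $W(m,n) = O(mn^2/\sqrt{M})$.

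I do not expect a serious obstacle here — the argument is essentially bookkeeping parallel to Theorem~\ref{thm: blocked_complexity}. The one point requiring a little care is verifying that forming the Gram submatrix genuinely adds no communication beyond reading the block columns: this rests on the fact that each constituent $2b \times 2b$ product fits in fast memory (so $b = O(\sqrt{M})$ must be interpreted with a small enough constant that two $2b \times 2b$ blocks plus workspace coexist in cache, exactly as in condition~2), and on the fact that the partial sums accumulating $\M{\hat{A}}$ also fit. A secondary subtlety is that the updates to $\M{G}$ in line~11 touch full $m$-row block columns rather than $2b \times 2b$ blocks, so one must be slightly careful to break them into cache-sized pieces of $2b$ rows; this changes nothing asymptotically since the total data touched is still $O(mb)$ per pair.
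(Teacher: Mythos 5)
Your proposal is correct and follows essentially the same route as the paper: both arguments bound the per-block-pair cost of forming $\M{\hat{A}}$ and applying the updates by $O(mb)$ words (the paper phrases this via the computational intensity $(2b)^{\omega_0-2}$ of the constituent $2b\times 2b$ products, you phrase it via directly streaming the block columns), note that lines~7--10 incur no communication since they fit in fast memory, and multiply by the $\Theta(\lceil n/b\rceil^2)$ block pairs and $O(1)$ sweeps. Your additional remarks on lines~16--17 and on breaking the $m$-row block-column updates into cache-sized pieces are harmless refinements of the same bookkeeping.
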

\begin{proof}
    For each $I < J$, forming $\M{\hat{A}}$ in line 5 requires $\Theta(mb^{\omega_0-1})$ flops. If we do this by allocating $\M{\hat{A}}$ and updating it by (1) reading in each $2b \times 2b$ block of $\M{G}(:,[I,J])$ and (2) adding to $\M{\hat{A}}$ the product of the block and its transpose, we can execute line 5 with computational intensity $(2b)^{\omega_0}/(2b)^2 = (2b)^{\omega_0-2}$. Hence, the communication required to obtain $\M{\hat{A}}$ is $O(mb^{\omega_0-1}/b^{\omega_0-2}) = O(mb)$. In each sweep of Algorithm~\ref{alg:Jacobi_SVD}, the cumulative communication attributable to line 5 is therefore $O(\lceil \frac{n}{b} \rceil^2 mb) = O(mn^2/b)$. Since the communication cost of lines 11 and 12 is the same as line 5, and moreover since the diagonalization/optional pivoting in lines 7-10 can be done entirely in fast memory, we conclude $W(m,n) = O(mn^2/b)$
\end{proof} 

\begin{prop}\label{prop: communication_Jacobi_SVD_3}
Let $W(m,n)$ denote the serial communication complexity of \textcolor{black}{one sweep of} Algorithm~\ref{alg:Jacobi_SVD} under the following conditions:
\begin{enumerate}
    \item $O(n^{\omega_0})$ matrix multiplication is used.
    \item $b > \sqrt{M}/2$ for $M$ the size of available fast memory.
    \item The method used to diagonalize $\mathbf{\hat{A}}$ in line~7 has communication cost $O(U(2b))$.
\end{enumerate}
Then  $W(m,n) = O\left(\frac{mn^2b^{\omega_0-3}}{M^{\omega_0/2-1}} + \lceil \frac{n}{b} \rceil^2U(2b)\right)$.    
\end{prop}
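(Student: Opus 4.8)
The plan is to mirror the proof of Proposition~\ref{prop: communication_Jacobi_SVD}, but now account for the fact that each subproblem $\M{\hat{A}}$ is too large to fit in fast memory, so its diagonalization incurs its own communication cost $U(2b)$, and the block multiplications/Gram-matrix formation must themselves be performed by a communication-efficient (Strassen-like or classical) matrix-multiplication routine. First I would fix a pair $I<J$ in a single sweep and itemize the communication attributable to each step. Forming $\M{\hat{A}} = \M{G}(:,[I,J])^T\M{G}(:,[I,J])$ in line~5 is a product of a $2b \times m$ matrix by an $m \times 2b$ matrix; partitioning into $\lceil m/(2b)\rceil$ square $2b \times 2b$ multiply-accumulates and invoking the communication bound $\Theta\!\left(\frac{b^{\omega_0}}{M^{\omega_0/2-1}}\right)$ for each (from \cite{BDHS12-FastLA}, valid since $b > \sqrt{M}/2$ so the block genuinely exceeds fast memory), this costs $O\!\left(\lceil \tfrac{m}{2b}\rceil \cdot \frac{b^{\omega_0}}{M^{\omega_0/2-1}}\right) = O\!\left(\frac{mb^{\omega_0-1}}{M^{\omega_0/2-1}}\right)$ words. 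The same bound applies to each of the two block-column multiplications in lines~11--12 (and the update of $\M{V}$, if requested). The diagonalization in line~7 contributes $U(2b)$ by hypothesis, and the optional LUPP step on $\M{\hat{V}}_1^T \in {\mathbb R}^{2b \times b}$ costs $O\!\left(\frac{b^{\omega_0}}{M^{\omega_0/2-1}}\right)$, which is absorbed into the multiplication terms.

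Next I would sum over the $\Theta(\lceil n/b\rceil^2)$ pairs $(I,J)$ in one sweep, obtaining a per-sweep communication cost of
\begin{equation*}
    O\!\left( \left\lceil \frac{n}{b} \right\rceil^2 \left( \frac{mb^{\omega_0-1}}{M^{\omega_0/2-1}} + U(2b) \right) \right) = O\!\left( \frac{mn^2 b^{\omega_0-3}}{M^{\omega_0/2-1}} + \left\lceil \frac{n}{b} \right\rceil^2 U(2b) \right).
\end{equation*}
Finally, since by assumption the sweep loop (lines~3--15) runs at most $O(1)$ times, and since the trailing bookkeeping in lines~16--17 ($\M{\Sigma}$ from column norms, $\M{U} = \M{G}\M{\Sigma}^\dagger$) costs only $O(mn)$ words — which is dominated by the first term whenever $b = o(\sqrt{M n^{2-\omega_0}} \cdot M^{\omega_0/2-1}/\ldots)$, and in any case is negligible relative to $\lceil n/b \rceil^2 U(2b)$ in the regime of interest — the stated bound follows.

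The main obstacle, as with the arithmetic count in Proposition~\ref{prop: arithmetic_Jacobi_SVD}, is not a hard estimate but rather making sure the accounting is honest about what "fits in fast memory." In particular, I should be careful that forming $\M{\hat{A}}$ by the block multiply-accumulate scheme only needs to hold one $2b \times 2b$ block of $\M{G}(:,[I,J])$ and the running $2b \times 2b$ accumulator at a time — these total $O(b^2)$ words, which is \emph{not} assumed to be $\le M$ here (since $b > \sqrt{M}/2$), so the $O(b^2)$ block itself must be streamed through the communication-optimal matrix-multiply kernel; fortunately that is exactly what the $\Theta(b^{\omega_0}/M^{\omega_0/2-1})$ bound already encapsulates, so no double-counting occurs. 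The only genuinely delicate point is confirming that the diagonalization cost $U(2b)$ and the matrix-multiply cost compose additively rather than multiplicatively across recursive levels — but this is guaranteed because line~7 is treated as a black box with cost $U(\cdot)$, exactly as $G(\cdot)$ was in Proposition~\ref{prop: arithmetic_Jacobi_SVD}, so the recursion (if any) is entirely hidden inside $U$.
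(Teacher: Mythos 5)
Your proposal is correct and follows essentially the same route as the paper's proof: itemize the per-pair communication (Gram formation and block updates as $\lceil m/(2b)\rceil$ square $2b\times 2b$ multiplies at $O(mb^{\omega_0-1}/M^{\omega_0/2-1})$ each, the black-box diagonalization at $U(2b)$, LUPP absorbed), multiply by $\Theta(\lceil n/b\rceil^2)$ pairs and $O(1)$ sweeps, and dismiss lines~16--17 as negligible. The only cosmetic difference is your extra hedging about when the $O(mn)$ trailing cost is dominated, which the paper simply asserts.
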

\begin{proof}
    As in the proof of Theorem~\ref{Thm:complexity_recursive_Jacobi} we apply the communication bounds from \cite{BDHS12-FastLA}, noting that, for $b > \sqrt{M}/2$, neither $2b \times 2b$ matrix multiplication nor the diagonalization in line 7 can take place in fast memory. For each pair $I < J$ in the main loop of Algorithm~\ref{alg:Jacobi_SVD} we have the following communication costs:
    \begin{itemize}
        \item Line~5 (done as $\lceil \frac{m}{2b} \rceil$ square $2b \times 2b$ multiplications): $O( \frac{mb^{\omega_0-1}}{M^{\omega_0/2 - 1}})$
        \item Line~7: $U(2b)$
        \item Lines~9-10: $O(\frac{b^{\omega_0}}{M^{\omega_0/2 - 1}})$
        \item Lines~11-12 (again done in $2b \times 2b$ pieces): $O(\frac{mb^{\omega_0-1}}{M^{\omega_0/2 -1}})$.
    \end{itemize}
    Hence, each sweep of one-sided Jacobi has communication complexity
    \begin{equation}\label{eqn: one_sweep_comm}
        O\left(\left\lceil \frac{n}{b}\right\rceil^2 \left( \frac{mb^{\omega_0-1}}{M^{\omega_0/2 -1}} + U(2b)\right) \right) = O\left( \frac{mn^2b^{\omega_0-3}}{M^{\omega_0/2 - 1}} + \left\lceil \frac{n}{b} \right\rceil^2 U(2b)\right).
    \end{equation}
    Since the communication associated with lines 16 and 17 is negligible compared to \eqref{eqn: one_sweep_comm}, we conclude that this is a bound for $W(m,n)$.
\end{proof} \\

\indent Note that the bound in Proposition~\ref{prop: communication_Jacobi_SVD_2} is smaller than that of Proposition~\ref{prop: communication_Jacobi_SVD} only when $m < \sqrt{M}$, in which case $\mathbf{G}$ fits in fast memory (recall that $m \geq n$) and the only communication required is the $mn$ words needed to read in the matrix. Like standard Jacobi, Proposition~\ref{prop: communication_Jacobi_SVD_3} implies that Algorithm~\ref{alg:Jacobi_SVD} can reach (nearly) optimal complexity by allowing the block size $b$ to grow with $n$, though this requires that the diagonalization in line~7 be done optimally itself (e.g., via recursive Jacobi).

\section{Numerical Examples}\label{section: numerical_ex}
We now test the performance of scalar, blocked, and recursive Jacobi with a handful of numerical experiments. The focus in this section is on tests that verify phenomena underpinning our theoretical results, specifically (1) that pivoting methods like QRCP and LUPP \textit{are} necessary in the blocked and recursive settings to guarantee convergence and (2) that when convergence is achieved, it occurs after what can reasonably be described as $O(1)$ sweeps. In Appendix~\ref{sec:Add_numerical_ex}, we present additional examples aimed at testing the methods with different recursive/blocking parameters. 
All experiments were conducted on a MATLAB implementation of Algorithms~\ref{alg:Jacobi1}--\ref{alg:Jacobi3}  with results obtained in MATLAB version R2024a. \\
\indent We start by discussing a few practical details of the implementation: 

\begin{itemize}
    \item \textbf{Convergence Metric:} In the pseudocode of Sections~\ref{section: classical}--\ref{section: recursive}, the convergence criteria for Jacobi were left intentionally vague. While it may seem natural to use the off-diagonal Frobenius norm \eqref{eqn: Omega_A} to monitor convergence,\footnote{Indeed, it is guaranteed to decrease monotonically as discussed in \cite{Jacobi1846,Drmac2009} and Section~\ref{section: classical} and was used to characterized convergence in our theoretical results.} this can introduce additional complexity in the blocked/recursive settings. In particular, it necessitates that each subproblem has a separate stopping rule, dependent not only on the desired final accuracy but also on the size of the subproblem. If this is not done, Jacobi may loop infinitely -- e.g., if the Frobenius norm of each subproblem lies below the convergence tolerance but their sum exceeds it. To avoid this complication, we adopt the largest off-diagonal entry, in absolute value, as our measure of convergence. Although this metric is \textit{not} guaranteed to decrease with each rotation, it is independent of problem size and therefore easier to implement. Moreover, a bound on the largest off-diagonal entry of $\M{A}$ immediately implies an upper bound on $\Xi(\M{A})$.\footnote{See the plots of Figure~\ref{fig:exp_comparison_bottom} in Appendix~\ref{sec:Add_numerical_ex} for an empirical comparison.} In each experiment, we stop (scalar, block, or recursive) Jacobi once 
    \begin{equation}\label{eqn: stopping_condition}
        \max_{i \neq j} \left| \M{A}^{(k)}_{ij} \right| \leq 10^{-7} \cdot \max_{i,j} \left| \M{A}^{(0)}_{ij} \right|,
    \end{equation}
    where $\M{A}^{(k)}$ denotes the matrix after the $k$-th sweep with $k = 0$ corresponding to the input matrix. Accordingly, convergence is  only checked at the end of each sweep. In the recursive case, to maintain consistency with the other methods, this condition is applied throughout the recursion with the right hand side fixed -- i.e., dependent on the \textit{full} input matrix. 

    \item \textbf{(Block) Ordering:} Unless otherwise stated, each algorithm works through off-diagonal entries/blocks according to a standard row-cyclic ordering (analagous to \eqref{eqn: column_cyclic}).
    
    \item \textbf{When to Rotate:} Consistent with the aforementioned convergence criteria, we apply a rotation to a subproblem $\M{\hat{A}}$ only if its maximal off-diagonal entry has magnitude above $10^{-7} \cdot \max_{i,j}|\M{A}_{ij}^{(0)}|$. This defines the criteria in lines 5, 6, and 9, respectively, of Algorithms~\ref{alg:Jacobi1}--\ref{alg:Jacobi3} (and applies uniformly regardless of the size of $\M{\hat{A}}$). 
    
    \item \textbf{Flop Count:} In addition to convergence data, we report an estimated flop count for each method tested. These are based on corresponding flop estimates for the key operations/kernels used in our Jacobi algorithms, as listed 
    in Table~\ref{tab:flops_approx}. Note that we use classical BLAS-3 estimates here rather than Strassen-like $O(n^{\omega_0})$ bounds for simplicity. While recursive algorithms and fast matrix multiplication may offer asymptotic advantages, these benefits typically manifest only for ``galactic" matrices. For the same reason, we use standard LUPP in place of its fast, recursive variant throughout. 
\end{itemize}

\renewcommand{\arraystretch}{1.2}
\begin{table}[t]
    \centering
    \begin{tabular}{lll}
        \toprule
        \textbf{Operation} & \textbf{Flops} & \textbf{Source}\\
        \midrule
        $\M{A}\M{B}$ for $\M{A} \in \mathbb{R}^{m \times n}$ and $\M{B} \in \mathbb{R}^{n \times p}$         & $mp(2n-1)$ & \cite[Chapter 2.6.1]{demmelAppliedLinearAlgebra}\\
        {Full $n \times n$ eigendecomposition}
 & $8\frac{2}{3}n^3$ & \cite[Chapter 5.3]{demmelAppliedLinearAlgebra}\\
        QRCP on $\M{A}, m\geq n$ &  $2mn^2 - \frac{2}{3} n^3$ & \cite[Chapter 3.2.2]{demmelAppliedLinearAlgebra}\\
        LUPP on $\M{A}, m\geq n$ &  $mn^2 - \frac{1}{3} n^3$ & \cite[Chapter 2.3]{demmelAppliedLinearAlgebra}\\
        \bottomrule
    \end{tabular}
    \caption{Approximate flop counts for the building blocks of Jacobi's method.}
    \label{tab:flops_approx}
\end{table}
\renewcommand{\arraystretch}{1}

\indent For each experiment, we take as input a random symmetric matrix, generated by drawing a standard Gaussian matrix $\M{G} \in {\mathbb R}^{n \times n}$ and setting $\M{A} = (\M{G}+\M{G}^T)/2$. The problem size $n$ is chosen to be relatively small so that each experiment can run quickly on a personal laptop (and, again, since the theoretical efficiency gains for blocked/recursive Jacobi will not kick in until the input matrix is large). In accordance with the conditions of Theorem~\ref{Thm:complexity_recursive_Jacobi}, we also take comparatively small log block sizes $f$ in Algorithm~\ref{alg:Jacobi3} -- i.e., not pushing $f \rightarrow 1$.

\begin{figure}[t]
    \centering
    \makebox[\textwidth][c]{%
        \includegraphics[width=\textwidth]{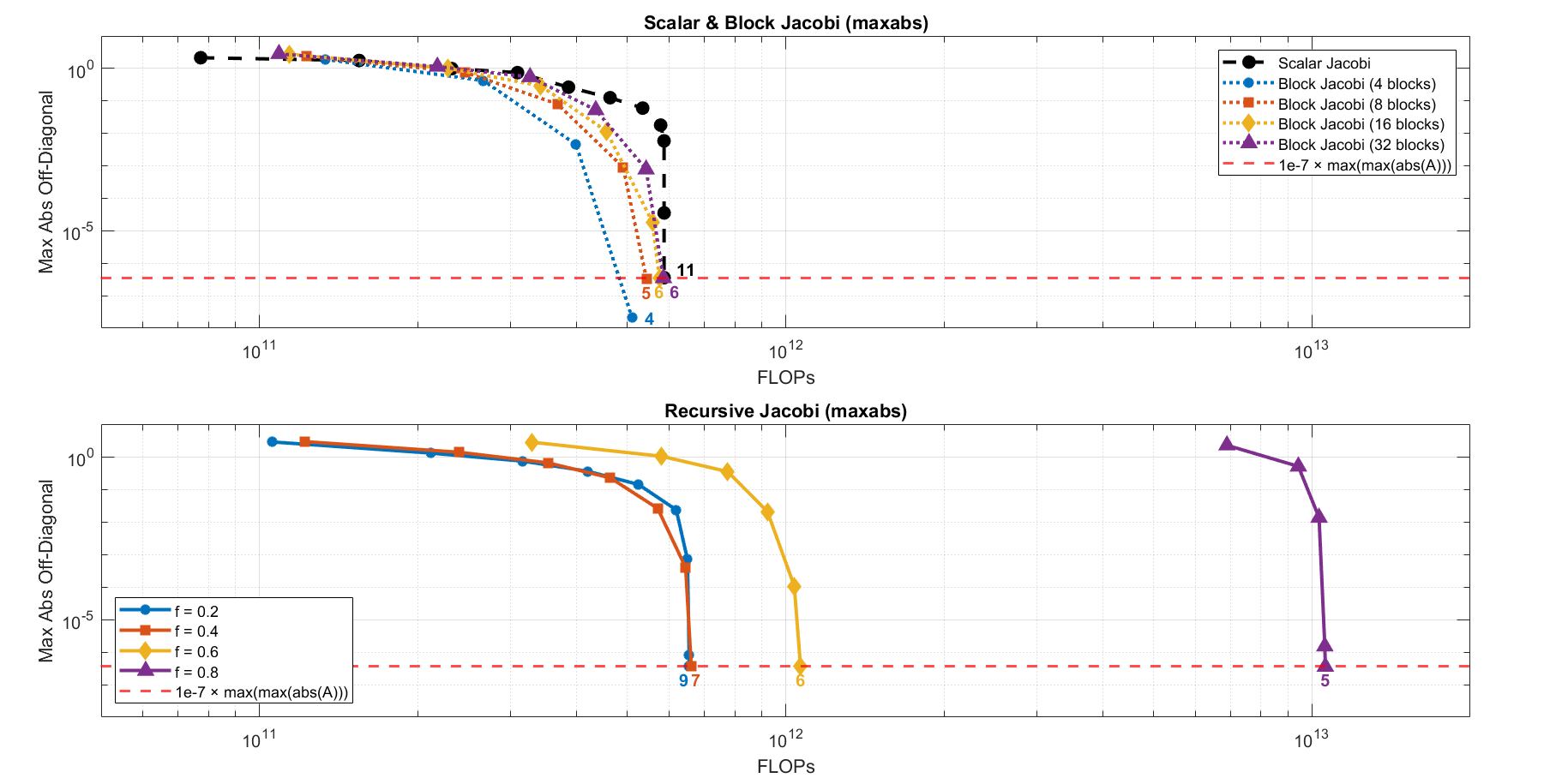}
    }
    \caption{Comparison of scalar, blocked and recursive Jacobi on a $512 \times 512$ input matrix. The final sweep count is listed at the end of each curve.}
    \label{fig:exp_comparison}
\end{figure}

\indent Our first experiment (Figure~\ref{fig:exp_comparison}) compares the scalar, blocked, and recursive versions of Jacobi on the same $512 \times 512$ input matrix. We include several versions of both Algorithm~\ref{alg:Jacobi2} (each with a different block size) and Algorithm~\ref{alg:Jacobi3} (each with a different \textit{log} block size). Both the blocked and recursive algorithms are done without applying either QRCP or LUPP to rotation matrices; as anticipated, since failure is unlikely to occur on a random input matrix, this does not prevent them from converging. For each method, we plot the current error (as measured by the largest off-diagonal entry) against an approximate flop count, with each data point corresponding to the end of one sweep. For easy comparison, we label the last of these data points by a total sweep count. Note that the recursive algorithms are all done with $n_{\text{threshold}} = 4$.  \\
\indent As expected, all nine algorithms included in Figure~\ref{fig:exp_comparison} converge in only a handful of sweeps. With the exception of the recursive algorithm with $f = 0.8$, which cannot reap the benefits of a large log block size given the choice of $n$, reaching this convergence requires around $10^{12}$ flops, which in this case is roughly $116n^3$. These observations confirm that recursive Jacobi can be an effective alternative to the classical and blocked algorithms even for moderate-scale problems, provided the thresholding and block size parameters are chosen appropriately. 
We explore these tuning parameters more extensively in Appendix~\ref{sec:Add_numerical_ex}. \\
\indent Our next experiment aims to demonstrate the efficacy of including a pivoting step like QRCP or LUPP in block/recursive Jacobi. For simplicity, we focus on the former, recalling that convergence of the blocked algorithm implies convergence for recursive Jacobi (see Proposition~\ref{prop:recursive_convergence}). Of course, as observed in the previous experiment, the blocked algorithm converges with no issue on random inputs, even when pivoting is omitted. Moreover, explicit failure examples like those discussed in Section~\ref{section: classical} are only known in the scalar setting. Accordingly, we verify the benefits of QRCP/LUPP by applying them to an ``adversarial" version of Algorithm~\ref{alg:Jacobi2}, specifically one that handles line 7 via an adversarial implementation of scalar Jacobi, which chooses rotation angles to explicitly violate the convergence condition of Forsythe and Henrici \cite{Forsythe_Henrici_1960}. This is done by adding $\pi/2$ to the angle $\theta$ coming from \eqref{eqn: roation_angle}. By applying a sequence of such ``sabotaged" $2\times 2$ rotations on each subproblem, we aim to drive Algorithm~\ref{alg:Jacobi2} towards failure. Since our adversarial scalar Jacobi algorithm itself is guaranteed to fail in certain cases, we cap the number of sweeps it performs on each subproblem.

\begin{figure}[t]
    \centering
    \makebox[\textwidth][c]{%
        \includegraphics[width=\textwidth]{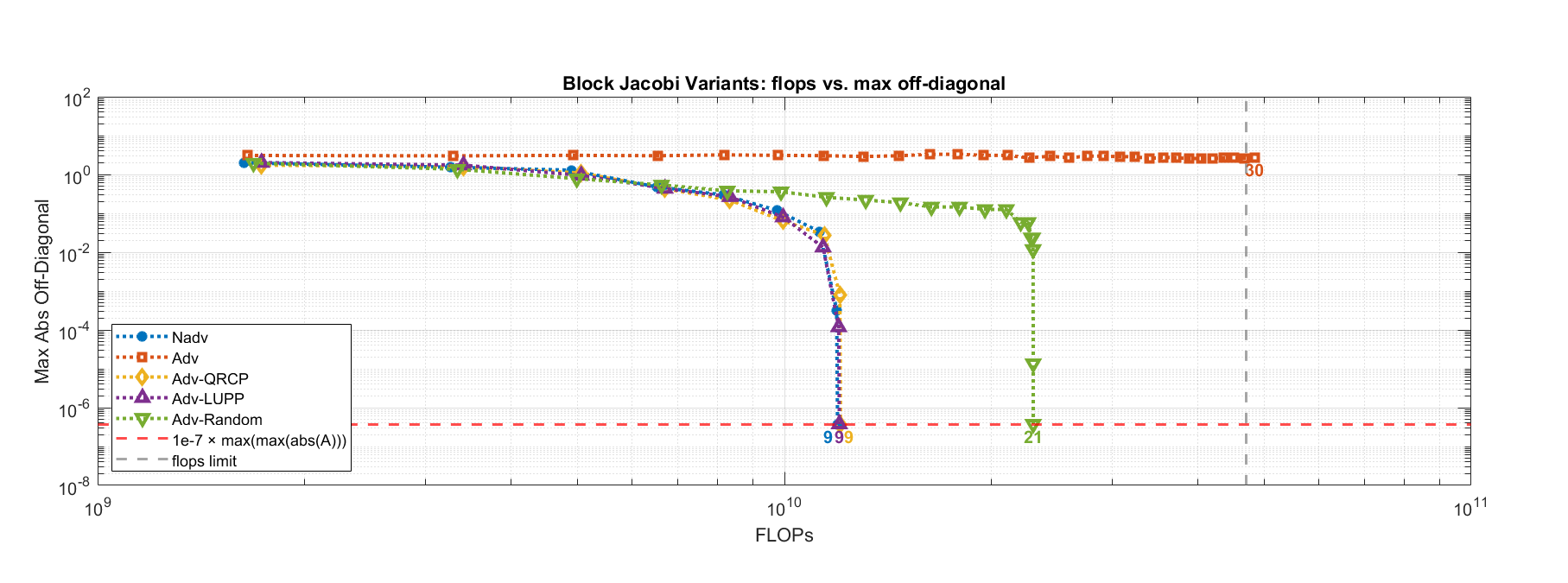}
    }
    \caption{Performance of various implementations of blocked Jacobi,  including Algorithm~\ref{alg:Jacobi2} without pivoting (Block Nadv) as well as an adversarial version both with pivoting (Block QRCP or Block LUPP) and without (Block Adv or Block Random, the latter handling off-diagonal blocks in random order). In each case, $n = 512$ and $b = 2$.}
    \label{fig:exp_comparisonADV}
\end{figure}

\indent Figure~\ref{fig:exp_comparisonADV} reports the performance of this adversarial blocked algorithm both with pivoting (i.e., ``Adv-QRCP" or ``Adv-LUPP") and without (``Adv"). The prefix ``Adv" or ``Nadv" indicates whether the ``sabotaged" rotation is applied when using Algorithm~\ref{alg:Jacobi1} within each block. For comparison, we also include ``Nadv", which refers to Algorithm~\ref{alg:Jacobi2} in its original form without pivoting and using Algorithm~\ref{alg:Jacobi1} for diagonalization, as well as ``Adv-Random", a variant of the adversarial blocked algorithm that processes off-diagonal blocks in random order, again without pivoting. The format of this plot is the same as in Figure~\ref{fig:exp_comparison}, with the input matrix again $512 \times 512$ and random.\\
\indent The results of this comparison are striking. Despite the fact that the input matrix is random, the adversarial implementation fails to converge (or really make any progress at all). In contrast, adding a pivoting step -- QRCP or LUPP -- is enough to guarantee  convergence, in particular without significantly driving up the flop count compared to Block Nadv. Choosing off-diagonal blocks randomly also yields convergence (echoing Proposition~\ref{prop: rand_converges}) though it requires additional sweeps and is therefore more expensive.  These results imply that both  pivoting and randomization effectively safeguard against failure, even when Jacobi is implemented adversarially. \\
\indent As mentioned above, we include additional numerical experiments in Appendix~\ref{sec:Add_numerical_ex}. As a quick summary, Table~\ref{tab:block-jacobi-sweeps} records the number of sweeps required by block Jacobi for different combinations of problem size $n$ and block size $b$, while Figures~\ref{fig:exp_comparison_bottom} and ~\ref{fig:exp_comparison_maxdepth} explore the convergence properties of recursive Jacobi with varying recursion depth/base-case diagonalization precisions.

\section{Conclusions and Open Problems}
This paper presented a detailed analysis of the arithmetic and communication complexity of Jacobi’s method for the symmetric eigenvalue problem and singular value decomposition. We examined classical, blocked, and recursive formulations of the algorithm with the aim of pushing its complexity towards that of matrix multiplication,
both in terms of computational efficiency and data movement. \\
\indent In a follow up to this work, we will extend our complexity analysis to the parallel setting, including with variable memory per processor. 
Other topics of interest for ongoing/future work include generalizations of Jacobi's method \cite{detherage2025unified,kolda2009tensor} as well as investigations of its compatibility with mixed-precision arithmetic \cite{Gao2025mixed, higham2025computing}. 

\section*{Acknowledgements}
This work was supported by NSF grant MSPRF 2402027 and partly supported by NSF grant DMS 2412403. Special thanks to Zlatko Drmač, Rikhav Shah, and Isabel Detherage for helpful discussions. \textcolor{black}{We also acknowledge two anonymous referees, who provided detailed feedback on an earlier draft of this paper.} For reproducibility, our code is available at \url{http://github.com/hrluo/RecursiveJacobi}.
\bibliographystyle{abbrv}
\bibliography{book-ref}

\appendix
\section{\textcolor{black}{Complexity Lower Bounds for Eigenvalue Problems}}\label{section: lower_bounds}
\textcolor{black}{In this appendix, we justify the claim that the symmetric eigenvalue problem -- and in fact the nonsymmetric eigenvalue problem as well -- is at least as difficult as matrix multiplication. We first note work of Bügisser, Karpinski, and Lickteig \cite{Burgisser1991}, which establishes that, given a symmetric matrix $\textbf{A} \in {\mathbb R}^{n \times n}$, the problem of finding an invertible $\textbf{S} \in {\mathbb R}^{n \times n}$ such that $\textbf{S} \textbf{A} \textbf{S}^T$ is diagonal (what they call the Orthogonal Basis Problem $OGB_n$) has complexity at least that of matrix multiplication. Since $\textbf{S}$ is not required to be orthogonal, $OGB_n$ is strictly easier than the symmetric eigenvalue problem.  \\
\indent More generally, we offer the following reduction of matrix multiplication to the \textit{nonsymmetric} eigenvalue problem. Given $\textbf{A}, \textbf{B} \in {\mathbb R}^{n \times n}$, consider the $3n \times 3n$ block matrix
\begin{equation}
    \textbf{M} = \begin{pmatrix} \textbf{D}_1 & \textbf{A} & 0 \\ 0 & \textbf{I} & \textbf{B} \\ 0 & 0 & \textbf{D}_3 \end{pmatrix},
\end{equation}
where $\textbf{D}_1$ and $\textbf{D}_3$ are diagonal with eigenvalues that are distinct from one another and distinct from 1. If $\lambda$ is the $i$-th entry on the diagonal of $\textbf{D}_3$, it is easy to see that $(\lambda, v)$ is an eigenpair of $\textbf{M}$ for
\begin{equation}
    v = \begin{pmatrix} (\lambda-1)^{-1}(\lambda \textbf{I} - \textbf{D}_1)^{-1} \textbf{A} \textbf{B} e_i \\ (\lambda-1)^{-1}\textbf{B}e_i \\ e_i\end{pmatrix},
\end{equation}
where $e_i$ is the $i$-th standard basis vector in ${\mathbb R}^n$. Hence, we can obtain the $i$-th column of $\textbf{A} \textbf{B}$ by computing $v$, extracting its first $n$ entries, and scaling each row -- i.e., multiplying by $(\lambda-1)(\lambda \textbf{I}-\textbf{D}_1)$, which we note requires only an additional $O(n)$ flops. Repeating this for all $n$ eigenvalues of $\textbf{D}_3$ provides the full product $\textbf{A}\textbf{B}$.}

\section{Additional Numerical Experiments}\label{sec:Add_numerical_ex}
This appendix contains a collection of additional numerical examples, focusing on fine-grain tests of the various parameters of block/recursive Jacobi. Implementation details are the same here as in Section~\ref{section: numerical_ex}. \\
\indent Our first test explores how the problem/block size impacts the number of sweeps required by Algorithm~\ref{alg:Jacobi2}. Data for various choices of $n$ and $b$ are reported in the following table, where, as in Figure~\ref{fig:exp_comparison}, we apply block Jacobi to a random symmetric  matrix without applying LUPP or QRCP. 

\renewcommand{\arraystretch}{1.2}
\begin{table}[h!]
\centering
\begin{tabular}{c|cccc}
\toprule
& \multicolumn{4}{c}{\textbf{Partition} ($n/b$)} \\
\textbf{Matrix Size} ($n$) & \hspace{.5pt}4\hspace{.5pt} & \hspace {.5pt} 8 \hspace{.5pt} & 16 & 32  \\
\hline
128 & 4 & 5 & 6 & 6 \\
256 & 4 & 5 & 6 & 6 \\
512 & 4 & 5 & 6 & 6 \\
1024 & 4 & 5 & 6 & 6 \\
2048 & 4 & 5 & 6 & 6 \\
\bottomrule
\end{tabular}
\caption{Number of sweeps required by Algorithm~\ref{alg:Jacobi2} for varying combinations of problem/block size.}
\label{tab:block-jacobi-sweeps}
\end{table}
\renewcommand{\arraystretch}{1}

Here, we observe that the number of sweeps required by block Jacobi does not grow with the problem size $n$. Again, this confirms empirically that our assumption of convergence in $O(1)$ sweeps is realistic when the blocking strategy is fixed; we can therefore expect the trend of Figure~\ref{fig:exp_comparison} -- i.e., that block Jacobi converges in only a handful of sweeps -- to hold even for much larger input matrices. At the same time, we note that increasing the partition size $n/b$ \textit{does} result in an increase, albeit modest, in the number of sweeps required to reach convergence. \\
\indent Next, we consider the impact of the bottom-case diagonalization accuracy on recursive Jacobi, recalling that in the blocked/recursive setting, exactly diagonalizing subproblems is not strictly necessary.  Figure \ref{fig:exp_comparison_bottom} shows the convergence behavior of Algorithm~\ref{alg:Jacobi3} with $n = 512$, $f = 0.4$, 
$n_{\text{threshold}} = 4$, and no pivoting. We include four versions, each of which handles the base case via scalar Jacobi's method (i.e., Algorithm~\ref{alg:Jacobi1}) run to a different final accuracy, again measured by largest off-diagonal entry. As expected, convergence is achieved relatively quickly when the bottom-case precision is strict enough (i.e., at least as strict as the overall convergence criterion), though the method may stagnate otherwise. This demonstrates the sensitivity of the recursive method to base-level accuracy; that is, lower accuracy in the smallest blocks may propagate and ultimately impede global convergence. To present the results comprehensively and assist readers in tracking the convergence history, Figure~\ref{fig:exp_comparison_bottom} additionally plots the corresponding off-diagonal Frobenius norm for each method as a reference. \\
\indent Finally, we consider the impact of recursion depth on Algorithm~\ref{alg:Jacobi3}. Figure~\ref{fig:exp_comparison_maxdepth} plots convergence data for several versions of recursive Jacobi, again for $n = 512$, $n_{\text{threshold}} = 4$ and no pivoting, but this time with $f = 0.8$. In each, a pre-determined maximum recursion depth is enforced by defaulting to a direct diagonalization if reaching the corresponding recursion level. Here, a recursive depth of zero corresponds to the original input matrix and therefore ``max recDepth = 1" reduces Algorithm~\ref{alg:Jacobi3} to Algorithm~\ref{alg:Jacobi2}. Again, we present both the maximum off-diagonal entry and the off-diagonal Frobenius norm for each method. We see here that, for fixed $n$, increasing the recursive depth drives up the number of flops but does not impact convergence.   
\begin{figure}[ht!]
    \centering
    \makebox[\textwidth][c]{%
        \includegraphics[width=\textwidth]{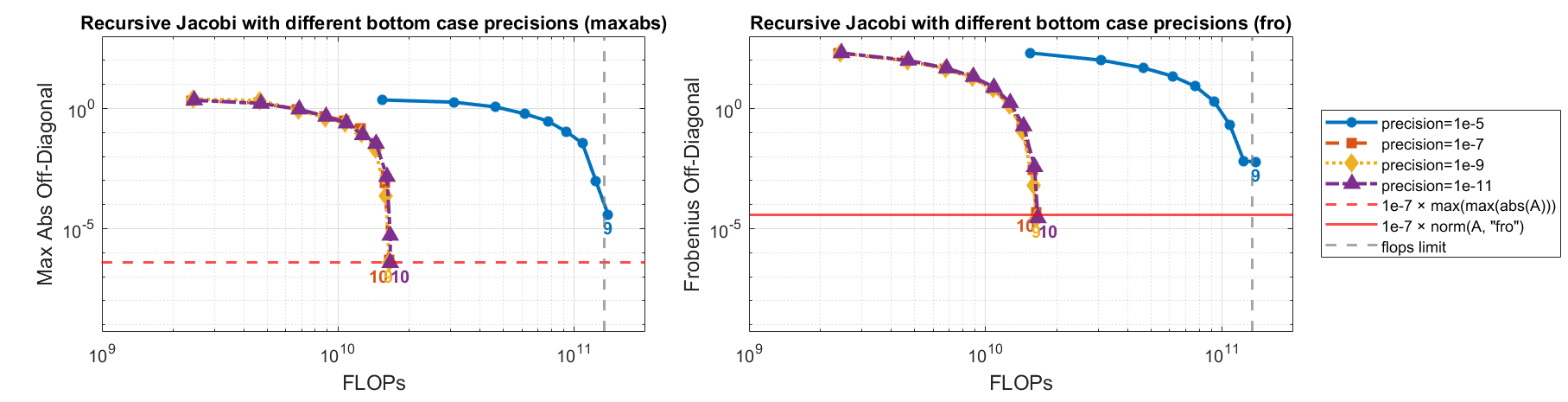}
    }
    \caption{Impact of base-case accuracy on the convergence of recursive Jacobi for a fixed $512 \times 512$ matrix, $f = 0.4$, and $n_{\text{threshold}} = 4$. To vary this accuracy, line 4 of Algorithm~\ref{alg:Jacobi3} is done by scalar Jacobi with different stopping conditions.}
    \label{fig:exp_comparison_bottom}
\end{figure}

\begin{figure}[ht!]    \centering
    \makebox[\textwidth][c]{%
        \includegraphics[width=\textwidth]{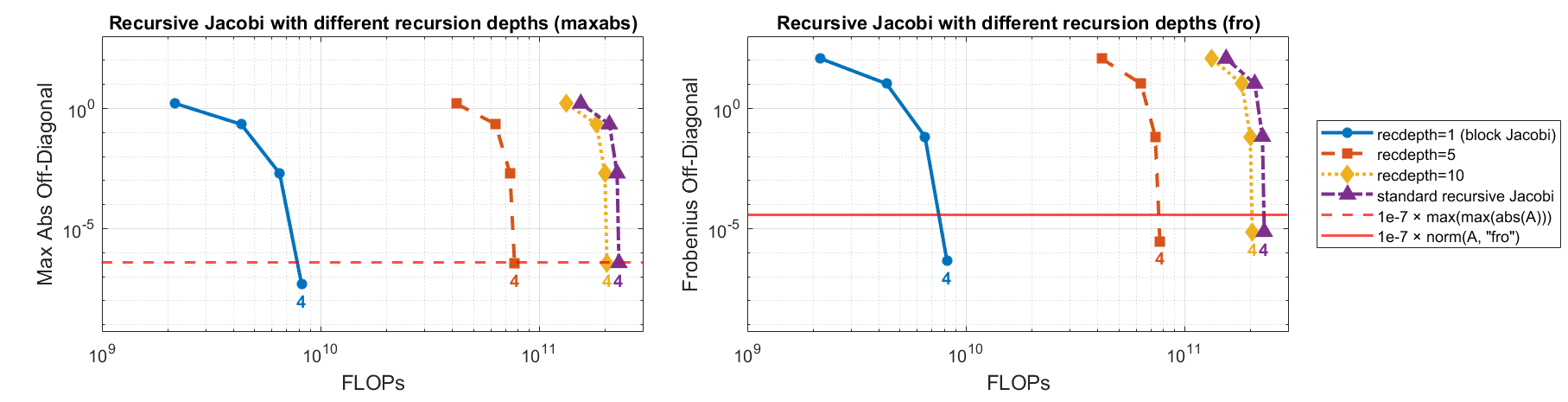}
    }
    \caption{Impact of pre-determined maximum recursion depth on the convergence of recursive Jacobi for a fixed $512 \times 512$ matrix, $f = 0.8$, and $n_{\text{threshold}} = 4$. For this test, recursion depth is controlled by manually defaulting to a direct diagonalization once the maximum is hit.}
    \label{fig:exp_comparison_maxdepth}
\end{figure}

\section{Fast, Recursive LUPP}\label{section: LUPP}
This appendix contains pseudocode for the recursive LUPP routine used to guarantee convergence in blocked/recursive Jacobi (i.e., Algorithm~\ref{alg:LUPP}). 

\begin{algorithm}[h]
\caption{Recursive LU with Partial Pivoting}
\label{alg:LUPP} \begin{algorithmic}[1] \Require $\M{A} \in {\mathbb R}^{m \times n}$ with $m \geq n$
\Ensure $\M{A} = \M{P}^T \M{L} \M{U}$ for $\M{L}$ unit lower trapezoidal, $\M{U}$ upper triangular, and $\M{P}$ a permutation matrix 
\algrule
\Function{$[\M{L},\M{U},\M{P}]=$ Recursive\_LUPP}{$\M{A}$} \State $\M{P} = \M{I}_m$
 \State $[\sim,i] = \max(\M{A}(\; : \;, 1))$
\If{$i \neq 1$}\
    \State Swap rows 1 and $i$ in $\M{A}$ and $\M{P}$ \Comment{Pivot largest entry to the top}
\EndIf
\If{$ n = 1$}
    \State $\M{L} = \M{A}/\M{A}(1)$; $\M{U} = \M{A}(1)$
\Else
    \State $[\M{L}_L, \M{U}_L, \M{P}_L] =$ \textsc{Recursive}\_LUPP($\M{A}(\; : \;  , 1:\lfloor \frac{n}{2} \rfloor$)) \Comment{Apply LUPP to left half of $\M{A}$}
    \State $\M{P} = \M{P}_L \M{P}$
    \setstretch{1.2}
    \State $\M{A}(\; : \; , \lfloor \frac{n}{2} \rfloor + 1:n) = \M{P}_L^T \M{A}(\; : \; ,\lfloor \frac{n}{2} \rfloor + 1:n)$ \Comment{Update right side of $\M{A}$}
    \State $\M{A}(1:\lfloor \frac{n}{2} \rfloor, \lfloor \frac{n}{2} \rfloor + 1 : n) = \M{L}_L(1:\lfloor \frac{n}{2} \rfloor, \; : \;)^{-1}\M{A}(1:\lfloor \frac{n}{2} \rfloor, \lfloor \frac{n}{2} \rfloor + 1 : n)$
    \State $\M{A}(\lfloor \frac{n}{2} \rfloor + 1 : m, \lfloor \frac{n}{2} \rfloor + 1: n) \mathrel{{-}{=}} \M{L}_L(\lfloor \frac{n}{2} \rfloor +1:m, \; : \; ) \M{A}(1:\lfloor \frac{n}{2} \rfloor,\lfloor \frac{n}{2} \rfloor + 1:n)$ 
    \State $[\M{L}_R, \M{U}_R, \M{P}_R] =$ \textsc{Recursive}\_LUPP($\M{A}(\lfloor \frac{n}{2} \rfloor + 1: m, \lfloor \frac{n}{2} \rfloor + 1 
    : n)$) \Comment{Factor lower right block}
    \State $\M{P}(\lfloor \frac{n}{2} \rfloor + 1 : m, \; : \; ) = \M{P}_R \M{P}(\lfloor \frac{n}{2} \rfloor + 1 : m, \; : \; )$ \vskip 3pt
    \State $\M{L} = \begin{bmatrix} \M{L}_L(1:\lfloor \frac{n}{2} \rfloor, \; : \;) & 0 \\
    \M{P}_R \M{L}_L(\lfloor \frac{n}{2} \rfloor +1 : m, \; : \;) & \M{L}_R \end{bmatrix}; \; \; \M{U} = \begin{bmatrix} \M{U}_L & \M{A}(1:\lfloor \frac{n}{2} \rfloor, \lfloor \frac{n}{2} \rfloor + 1: n) \\
    0 & \M{U}_R \end{bmatrix} $ \Comment{Assemble}

\EndIf
\EndFunction \end{algorithmic}
\end{algorithm}

\end{document}